\documentclass[preprint,3p,11pt,authoryear]{elsarticle}
\usepackage{setspace}
\usepackage{booktabs}
\usepackage{multirow}
\usepackage{graphicx}
\usepackage[short,c2,nocomma]{optidef}
\usepackage{xcolor}
\usepackage{amsmath, amsthm, amsfonts}
\usepackage{algorithm}
\usepackage{algorithmicx}%
\usepackage{algpseudocode}%

\usepackage[colorlinks=true,linkcolor=blue, citecolor=blue, urlcolor=blue, breaklinks]{hyperref}

\newtheorem{proposition}{Proposition}

\newtheorem{definition}{Definition}
\newtheorem{corollary}{Corollary}

\newtheorem{theorem}{Theorem}

\begin{document}

\begin{frontmatter}



\title{Input convex neural networks as surrogates in mathematical optimisation} 

\author[1]{Yu Liu} 
\author[2]{Jan Kronqvist}
\author[3,1]{Fabricio Oliveira\corref{cor}}
\cortext[cor]{Corresponding author.}
\ead{fabol@dtu.dk}

\affiliation[1]{organization={Department of Mathematics and Systems Analysis, School of Science, Aalto University}, city={Espoo}, country={Finland}}
\affiliation[2]{organization={Department of Mathematics, KTH Royal Institute of Technology}, city={Stockholm}, country={Sweden}}
\affiliation[3]{organization={DTU Management, Technical University of Denmark}, city={Lyngby}, country={Denmark}}

\begin{abstract}
Embedding trained neural networks as surrogates within optimisation problems is an established practice in operations research. The prevailing approach uses feedforward neural networks (FNNs) with ReLU activations, whose piecewise-linear structure admits an exact but computationally intensive mixed-integer programming (MIP) reformulation as the networks grow.
We advocate input convex neural networks (ICNNs) as structurally superior surrogates when the underlying response is approximately convex or concave. The convex architecture offers two computational advantages. First, the ICNN-MIP formulation tends to yield a tighter linear programming (LP) relaxation than its FNN-MIP counterpart, with no integrality gap in favourable instances. Second, ICNNs uniquely admit an LP-based reformulation via epigraph representations of ReLU activations, though this embedding is not always exact. 
When it is not, we exploit the properties of ICNNs to construct the strongest continuous relaxation over box domains, namely, the convex hull of the ICNN's graph, bounded below by the epigraph and above by the concave envelope; this construction is tractable under input convexity but hard for general ReLU networks. 
On this basis, we develop a branch-and-bound algorithm that builds this relaxation at each node, branches directly on input variables rather than intermediate variables as in MIP reformulations, and terminates at the root node whenever the epigraph embedding is valid. 
Case studies on humanitarian food aid, oil well routing, and wine blending show that ICNN surrogates match FNN accuracy and deliver gains in solve time and scalability, supporting ICNN as the default surrogate when the underlying function is convex, concave, or well-approximated as such.
\end{abstract}


\begin{highlights}
    \item ICNN-MIP admits exact LP relaxations when minimising the ICNN output.
    \item Epigraph and concave envelope bound the ICNN, giving its strongest relaxation.
    \item Together, they yield the ICNN graph's convex hull, unavailable for general NNs.
    \item A branch-and-bound exploits this relaxation and branches directly on input variables.
    \item Case studies confirm faster, more scalable solutions using ICNN surrogates.
\end{highlights}

\begin{keyword}
Mathematical optimisation  \sep Surrogate modelling \sep Neural networks


\end{keyword}

\end{frontmatter}

\section{Introduction}

Neural networks (NNs) have become an integral part of optimisation problems, particularly as surrogate models. NNs serve as reduced-order substitutes for complex functions that are accessible only via sampling and possess completely (``black box'') or partially (``grey box'') unknown structures~\citep{tsay_sobolev_2021,liu_simulator-based_2025}. Due to their accuracy and computational efficiency, NN surrogates are applied across a wide range of domains in operations research, including process engineering~\citep{addis_data_2023}, vehicle routing~\citep{wang_surrogate-accelerated_2025}, and  scheduling~\citep{fons_moreno-palancas_solving_2025}. 

The role of the surrogate in the optimisation model depends critically on how its inputs are treated.  This paper focuses on the more demanding setting where the surrogate inputs \emph{include} the decision variables, as arises when first-principles models are unavailable or too costly to evaluate. In this setting, the surrogate must be \emph{embedded} within the optimisation problem, either implicitly or explicitly.

Implicit embedding methods pass gradients or Hessians directly to nonlinear programming (NLP) solvers via back-propagation~\citep{chen_data-driven_2020, dowson_mathoptaijl_2026} and have been used extensively in nonlinear model predictive control~\citep{gokhale_physics_2022, lawrynczuk_input_2022, zheng_physics-informed_2023}. However, implicit methods provide no global optimality guarantees and can suffer from numerical instability. In contrast, explicit embedding encodes the surrogate model directly as constraints in the optimisation model, enabling the use of global solvers.

The dominant explicit embedding approach relies on feedforward neural networks (FNNs) with Rectified Linear Unit (ReLU) activations~\citep{huchette_when_2026}. Their piecewise-linear structure admits an exact mixed-integer programming (MIP) reformulation, in which each ReLU unit is encoded with a binary variable and big-$M$ constraints~\citep{cheng_maximum_2017, fischetti_deep_2018, serra_bounding_2018, tjeng_evaluating_2019}. The tightness of the linear programming (LP) relaxation of the resulting MIP is critical for solver performance, motivating stronger formulations based on convex hulls~\citep{anderson_strong_2020} or $P$-split formulations~\citep{kronqvist_p-split_2025}. 

Despite these advances, the FNN-MIP approach has a fundamental scalability limitation: every ReLU unit requires one binary variable, so the number of binary variables grows linearly with the total number of neurons. Compounding this, the continuous relaxation of the resulting MIP is inherently weak~\citep{tong_optimization_2024}, making the 0-1 MIP computationally prohibitive as network size increases. Acceleration techniques such as bound tightening~\citep{grimstad_relu_2019,wang_optimizing_2023}, model regularisation~\citep{kenefake_multiparametric_2024}, and model compression~\citep{cheng_survey_2020} can reduce computational burden, but the NP-hard nature of MIP remains a fundamental barrier for larger networks.

Input convex neural networks (ICNNs), proposed by~\citet{amos_input_2017}, offer an alternative approach when the underlying phenomenon being modelled is approximately convex. By imposing specific architectural constraints, ICNNs encode a convex relationship between inputs and outputs. This structural property enables exact inference via an LP-based reformulation that uses an epigraph representation of ReLU activations, without introducing binary variables, which is particularly valuable when the learned model must be embedded within a larger optimisation problem. 

Prior work on ICNNs falls broadly into two streams. The first concerns standalone function approximation and system identification, where ICNNs serve purely as predictive models, with examples including learning Optimal Power Flow (OPF) value functions~\citep{chen_data-driven_2020, rosemberg_learning_2024}, optimal transport between two distributions~\citep{makkuva_optimal_2020}, quantifying shipboard fuel efficiency~\citep{kim_ship_2026}, and learning system dynamics for use in model predictive control~\citep{yang_optimization-based_2021, bunning_input_2021, jiang_path-following_2022, wang_fast_2024}. The second, more recent stream concerns ICNN-embedded optimisation, distinguished by its explicit use of LP representability to incorporate trained networks directly as constraints within optimisation problems~\citep{dvorkin_emission-constrained_2023, wu_transient_2024, zhu_day-ahead_2025, liu_icnn-enhanced_2025}.

In this paper, we investigate the use of ICNNs as surrogate models embedded within mathematical optimisation problems, where the surrogate inputs include the decision variables. While the LP representability of ICNNs is well established, two fundamental questions remain unaddressed in the literature. 
The first concerns the validity of the epigraph embedding, including the conditions under which it yields exact reformulation, as well as how to obtain convergence to the true optimum when validity fails. The second concerns the comparative tightness of ICNN-MIP versus FNN-MIP LP relaxations, which has not been rigorously established despite MIP reformulation remaining the predominant practical approach. We address these gaps and make the following contributions:%
\begin{enumerate}
    \item Structural analysis of ICNN-MIP formulations: We characterise exactness properties of the ICNN-MIP LP relaxation (Proposition~\ref{prop:exactness}) and compare the resulting relaxation gap with that of FNN-MIP formulations.
    \item Formal definition of epigraph embedding validity: We give a precise definition of when the epigraph embedding of an ICNN is a valid reformulation of the original problem (Definition~\ref{def:validity}) and discuss the problem structures under which validity holds or fails.
    \item The strongest continuous relaxation of ICNN surrogates: We construct a concave envelope of the ICNN from vertex evaluations (Proposition~\ref{prop:concave_env}) and prove that, combined with the epigraph, it yields the convex hull of the ICNN's graph over box domains, the strongest continuous relaxation of the ICNN formulation (Theorem~\ref{thm:strongest_relax}). No counterpart is known for general ReLU network formulations.
    \item A specialised branch-and-bound (BB) algorithm: Building on this relaxation, we develop ICNN-BB~(Algorithm~\ref{alg:BB}), an LP-based BB algorithm for ICNN-embedded optimisation that avoids introducing binary variables. The algorithm branches directly on the input variables rather than on intermediate variables encoding the network, and terminates at the root node whenever the epigraph embedding is valid.
    \item Empirical validation on three case studies: We evaluate the proposed approaches across three problems from the literature, confirming tighter MIP relaxations for ICNN-MIP, root-node termination of ICNN-BB when the epigraph embedding is valid, and superior performance of ICNN-BB where the vertex-based envelope remains manageable.
\end{enumerate}

The structure of the paper is as follows. Section~\ref{sec:pre} reviews FNN and ICNN fundamentals and introduces the general setting of embedding trained NNs in optimisation problems. Section~\ref{sec:icnn_mip} provides a structural comparison of ICNN-MIP and FNN-MIP LP relaxations. Section~\ref{sec:icnn_bb} formalises the validity of the epigraph embedding, constructs the strongest continuous relaxation of the ICNN formulation, and develops the specialised BB algorithm built on it. Section~\ref{sec:exp} describes the three case studies and analyses computational results. Section~\ref{sec:con} concludes the findings and limitations.
    
\section{Preliminaries}
\label{sec:pre}

First, we briefly introduce the architectures of FNNs and ICNNs and their exact reformulations as mathematical optimisation problems. The training of ICNNs is also discussed. Finally, we present the general setting in which trained NNs are embedded in optimisation problems, and the two computational routes for doing so.

\subsection{Feedforward neural networks}\label{sec:pre_fnn}

FNNs consist of consecutive layers made up of individual units called neurons, each of which applies an activation function to a weighted sum of the previous layer's outputs. Although we consider only networks with a single output neuron, the techniques can be easily extended to handle multiple outputs.

\subsubsection{Mixed-integer formulation}

An FNN with piecewise-linear activations can be exactly reformulated as a mixed-integer program (MIP). We consider networks with ReLU activations in the hidden layers and an identity function in the output layer. Following~\citet{fischetti_deep_2018}, we formulate the FNN-MIP as~\eqref{eq:nn_mip}. Here $W_k$ and $b_k$ denote the weight matrix and bias vector of layer $k$, and $x_k$ is the vector of post-activation values at layer $k$. For each neuron $j$ in the hidden layer $k$, a binary variable $\delta_{k,j} \in \{0,1\}$ indicates activation state and a slack variable $s_{k,j} \geq 0$ captures the negative part of the pre-activation value. The big-$M$ constants $U_{k,j}$ and $L_{k,j}$ in constraints~\eqref{milp:con2} and~\eqref{milp:con3} are valid upper and lower bounds on the pre-activation value of neuron $j$ in layer $k$. Constraint~\eqref{milp:con4} bounds the input variables to a specified domain.%
\begin{mini!}
    {}{0 \label{eq:obj}}{\label{eq:nn_mip}}{}
    \addConstraint{W_{k} x_{k} + b_{k}}{= x_{k+1} - s_{k+1} \quad}{k=0,\ldots,K-2 \label{milp:con1}}
    \addConstraint{W_{K-1} x_{K-1} + b_{K-1}}{= x_{K} \quad}{\label{milp:con7}}
    \addConstraint{x_{k,j}}{\leq U_{k,j} \cdot \delta_{k,j} \quad}{k=1,\ldots,K-1, \ j=1,\ldots,n_k \label{milp:con2}}
    \addConstraint{s_{k,j}}{\leq -L_{k,j} \cdot (1-\delta_{k,j}) \quad}{k=1,\ldots,K-1 ,\ j=1,\ldots,n_k \label{milp:con3}}
    \addConstraint{L_0 \leq x_0}{\leq U_0 \label{milp:con4}}
    \addConstraint{\delta_{k,j}}{\in \{0, 1\} \quad}{k=1,\ldots,K-1 ,\ j=1,\ldots,n_k \label{milp:con5}}
    \addConstraint{x_{k,j},\, s_{k,j}}{\geq 0 \quad}{k=1,\ldots,K-1 ,\ j=1,\ldots,n_k. \label{milp:con6}}
\end{mini!}

The zero objective reflects that the formulation encodes the network structure as a system of constraints. For a fixed input $x_0$, the constraints uniquely determine the continuous forward-pass variables, recovering the network output $x_K$. When using FNNs as surrogates, appending these constraints to a larger optimisation problem is sufficient to embed the network.

\subsection{Input convex neural networks}\label{sec:pre_icnn}

\subsubsection{ICNN architectures}

ICNNs have a similar structure to FNNs with the exception that, during training, the layer weights are constrained to be non-negative (except for the first layer), and the activation function $\sigma(\cdot)$ must be convex and non-decreasing, a condition satisfied by ReLU. These modifications make the output of ICNNs convex with respect to the inputs, as proved by~\citet{amos_input_2017}. In addition, weighted skip connections from the input layer to each of the subsequent layers are implemented to improve representation capacity while preserving convexity. The simplified structure is illustrated in Figure~\ref{fig:icnn_struct}. Here, $z_0$ denotes input vectors, $W_k$ the ``regular'' layer weights and $S_k$ the skip connection weights.

\begin{figure}[ht!]
    \centering
    \includegraphics[width=0.45\linewidth]{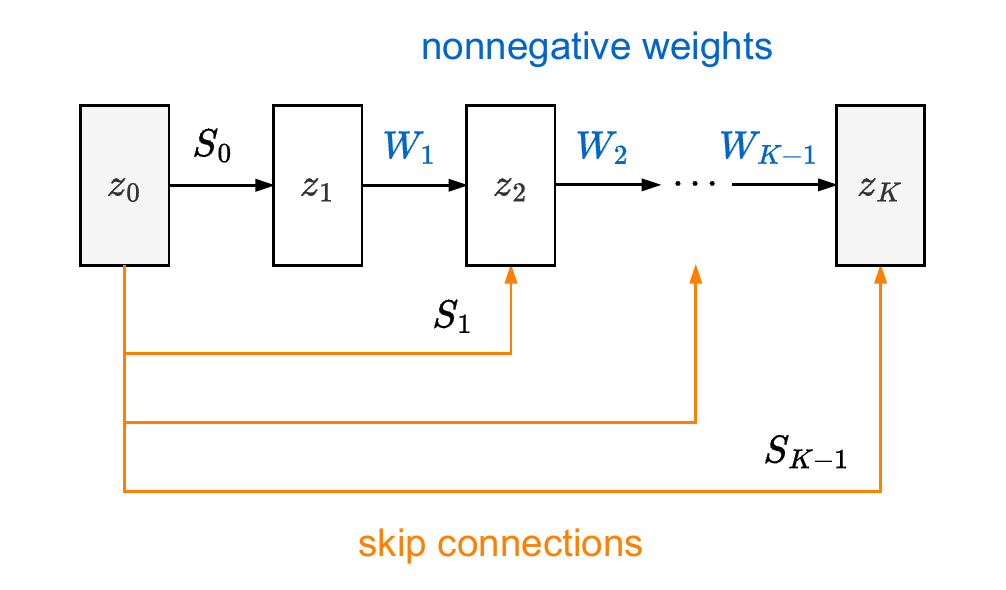}
    \caption{A simplified illustration of the ICNN architecture, reproduced from~\citet{liu_icnn-enhanced_2025}.}
    \label{fig:icnn_struct}
\end{figure}

The mapping for each layer can be expressed generically as:%
\begin{equation}
    z_{k+1} = \sigma ( W_k z_k + S_k z_0 + b_k), \quad k = {0, \dots, K-1},
\label{eq:fcicnn}
\end{equation}
where $z_k$ for $k \geq 1$ is obtained by applying an activation function $\sigma(\cdot)$, with $W_0 \equiv 0$ by convention. Since $W_{1:K-1} \geq 0$ and $\sigma(\cdot)$ is convex and non-decreasing, the convexity of $\hat{f}$ in $z_0$ is preserved layer by layer. Indeed, a non-negative weighted sum of convex functions remains convex, and composing it with a convex non-decreasing function retains this property. Importantly, the skip connections introduce only linear terms in $z_0$, which do not violate convexity.

\subsubsection{Exact inference via LP}

The key distinction between ICNNs and FNNs is that ICNN inference admits an exact LP representation. For a fully connected ICNN with ReLU activations in the hidden layers and an identity output layer, as described in~\eqref{eq:fcicnn}, performing inference for a given input $z_0$ is equivalent to solving the following LP~\citep{amos_input_2017}:%
\begin{mini!}
    {z_1, \ldots, z_K}{z_K \label{icnn_lp:obj}}{\label{icnn_lp}}{}
    \addConstraint{z_{k+1}}{\geq W_k z_k+S_k z_0+b_k, \quad}{k=0,\ldots,K-1 \label{icnn_lp:con1}}
    \addConstraint{z_k}{\geq 0, \quad}{k=1,\ldots,K-1. \label{icnn_lp:con2}}
\end{mini!}

Formulation~\eqref{icnn_lp} adopts an \textit{epigraph} representation of the ReLU activations, replacing the layer-wise nonlinear equality constraints with linear inequalities. The minimisation objective ensures tightness by enforcing that, at optimality, at least one constraint in~\eqref{icnn_lp:con1} or~\eqref{icnn_lp:con2} must be active, which recovers the exact feedforward computation. This LP structure eliminates the need for binary variables, which is the computational basis for embedding ICNNs in larger optimisation problems without the overhead of an MIP, in contrast to the formulation required for FNNs.

\subsubsection{Practical considerations for training ICNNs}

ICNNs are trained via constrained optimisation, where the non-negativity of the weights is enforced explicitly. This can be handled efficiently and reliably using projected gradient methods, in which each gradient update is followed by a projection onto the non-negative orthant, i.e., clipping negative weights to zero. This projection step can slow convergence, as gradient updates may be partially nullified when weights are clipped. In practice, this effect can often be mitigated by moderately increasing the learning rate, though the benefit is problem-dependent and care must be taken to avoid instability. 

An important consideration before training is the nature of the target function. ICNNs can be expected to achieve good fit values only on data that suggest a convex or nearly convex relationship between inputs and outputs. Concave targets can be handled by negating the output before training. When the shape of the input-output relationship is not known in advance, a simple preliminary diagnostic with compact networks suffices. This involves training an FNN on the original target and two ICNNs, one on the original target and one on the negated target. Since FNNs are universal approximators~\citep{hornik_multilayer_1989}, the FNN's validation accuracy serves as a benchmark for what is achievable in terms of representation using ICNNs instead. Whichever ICNN comes closest to this benchmark indicates whether the target is approximately convex or concave, and a small gap suggests that the convexity restriction is not too limiting. Once this is established, the chosen ICNN can be scaled up for the final surrogate.

\subsection{Embedding trained NNs in optimisation problems}
\label{sec:pre_embedding}

We now describe the general setting in which trained NN surrogates are used within optimisation problems. Let $f:\mathbb{R}^n\to\mathbb{R}$ be a black-box mapping observed on data. A network $\hat f$ is trained to approximate $f$ and then embedded in a downstream optimisation model%
\begin{mini!}
    {x,y}{h(x,y) }{\label{eq:orig}}{}
    \addConstraint{g(x,y)}{ \leq 0, \quad}{}
    \addConstraint{y}{= \hat f(x), \quad}{}
    \addConstraint{x}{\in \mathcal{X}, \quad}{}
\end{mini!}
where $\mathcal{X} = \{x \in \mathbb{R}^n : l_j \leq x_j \leq u_j,\; j = 1,\dots,n\}$ is a box. Two cases of~\eqref{eq:orig} are of interest. In the \textit{direct output minimisation} case, $h(x,y)=y$ and the constraint $g$ is absent, so~\eqref{eq:orig} amounts to minimising the surrogate output over its input domain. In the \textit{general embedding} case, the surrogate output has a coupling with the remaining decision variables via the objective $h$ and constraint set $g$.

Two computational routes are available for representing $y = \hat f(x)$ within~\eqref{eq:orig}. The first applies to any ReLU network, FNN or ICNN, and encodes the exact forward pass with binary activation variables, as in the MIP formulation~\eqref{eq:nn_mip}. Section~\ref{sec:icnn_mip} derives the corresponding ICNN-MIP formulation and analyses its structure. The second route is specific to ICNNs and replaces each ReLU by the epigraph representation underlying~\eqref{icnn_lp}, yielding an embedding that involves only linear constraints and no binary variables. This LP route, however, comes with a caveat. The epigraph representation enforces only $y \geq \hat f(x)$, and the exactness of such reformulation stems from its objective, which penalises any overestimation of the network output. A general embedding need not penalise such overestimation. The epigraph embedding therefore constitutes a relaxation of~\eqref{eq:orig} and is only \textit{equivalent} when this relaxation is tight (or exact), a notion we formalise and analyse in Section~\ref{sec:validity}.

\section{Structural analysis of ICNN-MIP formulations}
\label{sec:icnn_mip}

While ICNNs are typically associated with their favourable LP-based inference, their structural properties offer significant, yet underexplored, advantages for MIP reformulations. This section develops the first computational route introduced in Section~\ref{sec:pre_embedding}, deriving such ICNN-MIP formulations and analysing their structural properties. 

\subsection{Derivation of ICNN-MIP formulations}

We first derive the MIP formulation for embedding ICNNs, whose exactness holds unconditionally. The architectural features that distinguish ICNNs from FNNs do not alter the piecewise-linear nature of the function represented by the network when ReLU activations are used. As a result, an ICNN can be formulated as an MIP by adopting the same big-$M$ formulation as presented for FNNs in Section~\ref{sec:pre_icnn}, with the only addition being the $S_k z_0$ term arising from the input-layer skip connections. Recall the layer-wise propagation of an ICNN defined in~\eqref{eq:fcicnn}. As in the FNN-MIP formulation~\eqref{eq:nn_mip}, binary variables $\delta_{k,j} \in \{0, 1\}$ indicate the activation state of the $j$-th neuron in the hidden layer $k$, and slack variables $s_{k,j} \geq 0$ capture the negative part of the pre-activation value. The ICNN-MIP formulation is as follows:%
\begin{mini!}
    {}{\gamma z_K \label{eq:icnn_obj}}{\label{eq:icnn_mip}}{}
    \addConstraint{W_{k} z_{k} + S_{k} z_0 + b_{k}}{= z_{k+1} - s_{k+1} \quad}{k=0,\ldots,K-2 \label{icnn_mip:con1}}
    \addConstraint{W_{K-1} z_{K-1} + S_{K-1} z_0 + b_{K-1}}{= z_{K} \quad}{ \label{icnn_mip:con7}}
    \addConstraint{z_{k,j}}{\leq U_{k,j} \cdot \delta_{k,j} \quad}{k=1,\ldots,K-1, \ j=1,\ldots,n_k \label{icnn_mip:con2}}
    \addConstraint{s_{k,j}}{\leq -L_{k,j} \cdot (1-\delta_{k,j}) \quad}{k=1,\ldots,K-1 ,\ j=1,\ldots,n_k \label{icnn_mip:con3}}
    \addConstraint{L_0 \leq z_0}{\leq U_0 \label{icnn_mip:con4}}
    \addConstraint{\delta_{k,j}}{\in \{0, 1\} \quad}{k=1,\ldots,K-1 ,\ j=1,\ldots,n_k \label{icnn_mip:con5}}
    \addConstraint{z_{k,j},\, s_{k,j}}{\geq 0 \quad}{k=1,\ldots,K-1 ,\ j=1,\ldots,n_k. \label{icnn_mip:con6}}
\end{mini!}
Here, $\gamma$ is a scalar parameter. When $\gamma=0$, \eqref{eq:icnn_mip} becomes a feasibility problem. That is, when the input $z_0$ is fixed, the formulation simply enforces the exact forward pass of the trained network for $z_0$. When $\gamma>0$, the formulation minimises the ICNN output with cost coefficient $\gamma$, corresponding, up to a positive scaling, to the direct output minimisation case of~\eqref{eq:orig}. A negative $\gamma$ would instead maximise the output, an instance of the general embedding case of~\eqref{eq:orig}. We restrict attention to $\gamma \geq 0$ in the LP relaxation analysis below, and return to the general embedding case at the end of Section~\ref{sec:icnn_fnn_mip_comparison}.

The resulting ICNN-MIP formulation~\eqref{eq:icnn_mip} is structurally equivalent to the FNN-MIP in~\eqref{eq:nn_mip}, with the addition of input-layer skip connections through the terms $S_k z_0$ and the non-negativity of the weights $W_{1:K-1}$. As in the case of FNN-MIPs, these constraints can be embedded to represent ICNNs as surrogates within larger optimisation problems.

\subsection{Exactness properties of the ICNN-MIP relaxation}

A critical performance determinant for MIPs is the strength of their LP relaxation, obtained by relaxing integrality constraints on the binary activation variables. Weaker relaxations tend to result in larger search trees and poorer computational performance, whereas tighter relaxations provide stronger bounds and can reduce the amount of branching required.

Let $\mathcal P_{\mathrm{MIP}}$ denote the feasible region of the ICNN-MIP formulation~\eqref{eq:icnn_mip}, and let $\mathcal P_{\mathrm{LP}}$ denote the feasible region obtained from~\eqref{eq:icnn_mip} by replacing $\delta_{k,j}\in\{0,1\}$ with $\delta_{k,j}\in[0,1]$. For a fixed input $z_0=\bar z_0$, the MIP determines the output value $z_K$ uniquely, independently of the value of $\gamma$, whereas the LP relaxation may admit multiple feasible values of $z_K$ and return an arbitrary feasible point if $\gamma=0$. To assess whether the relaxation would underestimate the network output, we minimise $\gamma z_K$ over $\mathcal P_{\mathrm{LP}}$ with $\gamma>0$, thereby identifying the smallest feasible value of $z_K$. Since any positive $\gamma$ affects only the objective and not the feasible region, we set $\gamma=1$ without loss of generality in the following analysis.

\begin{proposition}[Exactness of the ICNN-MIP relaxation with fixed input]\label{prop:exactness}
Let $\hat f_{\mathrm{ICNN}}:\mathbb R^{n}\to\mathbb R$ be an ICNN with ReLU activations, bounded input domain $\mathcal Z$, and non-negative weights $W_{1:K-1}$. Then, for every fixed input $\bar z_0\in\mathcal Z$, the minimum output value over the LP relaxation of~\eqref{eq:icnn_mip} equals the exact ICNN output obtained from the MIP, i.e.,%
    \begin{equation}
        \min\{z_K : (z,s,\delta)\in\mathcal P_{\mathrm{LP}},\ z_0=\bar z_0\}
        =
        \min\{z_K : (z,s,\delta)\in\mathcal P_{\mathrm{MIP}},\ z_0=\bar z_0\}
        =
        \hat f_{\mathrm{ICNN}}(\bar z_0).
        \label{eq:fixed_input_mip_lp_equivalence}
    \end{equation}
Thus, when $z_0$ is fixed, the LP relaxation of~\eqref{eq:icnn_mip} has zero integrality gap.
\end{proposition}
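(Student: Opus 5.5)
The argument compares the LP relaxation with the epigraph LP~\eqref{icnn_lp}, whose optimal value is known to equal $\hat f_{\mathrm{ICNN}}(\bar z_0)$ \citep{amos_input_2017}. It then sandwiches the relaxation value between the MIP value and this lower bound. The second equality in~\eqref{eq:fixed_input_mip_lp_equivalence} is immediate: for fixed $\bar z_0$ the MIP constraints, with valid big-$M$ bounds $L_{k,j}\le 0\le U_{k,j}$ on the pre-activations, force $z_{k,j}=\max\{a_{k,j},0\}$ and $s_{k,j}=\max\{-a_{k,j},0\}$, where $a_{k,j}$ is the pre-activation. Hence $z_K$ is uniquely the forward-pass output. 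Since $\mathcal P_{\mathrm{MIP}}\subseteq\mathcal P_{\mathrm{LP}}$, the LP minimum is at most $\hat f_{\mathrm{ICNN}}(\bar z_0)$. The rest of the proof establishes the reverse inequality.

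\emph{Step 1 (projection into the epigraph LP).} Take any $(z,s,\delta)\in\mathcal P_{\mathrm{LP}}$ with $z_0=\bar z_0$. From~\eqref{icnn_mip:con1} and $s_{k+1}\ge 0$ we get $z_{k+1}=W_kz_k+S_k\bar z_0+b_k+s_{k+1}\ge W_kz_k+S_k\bar z_0+b_k$ for $k\le K-2$. Constraint~\eqref{icnn_mip:con7} gives equality at the output layer, and~\eqref{icnn_mip:con6} gives $z_k\ge 0$. So $(z_1,\dots,z_K)$ is feasible for~\eqref{icnn_lp} with $z_0=\bar z_0$. The big-$M$ constraints~\eqref{icnn_mip:con2}--\eqref{icnn_mip:con3} and $\delta$ play no role here; relaxing them only enlarges the set, and the projection still lands inside the epigraph feasible set.

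\emph{Step 2 (the epigraph LP cannot undercut the network).} I would prove directly that every feasible point of~\eqref{icnn_lp} satisfies $z_k\ge \hat z_k$ componentwise, where $\hat z_k$ denotes the true forward-pass values. The proof is by induction on $k$. The base case is the first hidden layer, since $W_0=0$: there $z_1\ge S_0\bar z_0+b_0$ and $z_1\ge 0$ give $z_1\ge\max\{S_0\bar z_0+b_0,0\}=\hat z_1$. For the inductive step, if $z_k\ge\hat z_k$ and $W_k\ge 0$ for $k\ge1$, then $W_kz_k+S_k\bar z_0+b_k\ge W_k\hat z_k+S_k\bar z_0+b_k$. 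Combined with $z_{k+1}\ge 0$ for hidden layers, this gives $z_{k+1}\ge\hat z_{k+1}$. At the output layer the same monotonicity gives $z_K\ge W_{K-1}\hat z_{K-1}+S_{K-1}\bar z_0+b_{K-1}=\hat f_{\mathrm{ICNN}}(\bar z_0)$.

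Combining Steps 1 and 2, the LP-relaxation minimum is at least $\hat f_{\mathrm{ICNN}}(\bar z_0)$, which closes the sandwich. The main point of care is Step 2, where the non-negativity of $W_{1:K-1}$ is essential. For a general FNN a negative weight would let an inflated $z_k$ decrease downstream pre-activations, and the argument would fail. I would also state explicitly that the big-$M$ bounds are valid, so that the MIP is nonempty and exact at $\bar z_0$. Finally, the relaxation minimum is attained by the MIP point itself, so both minima in~\eqref{eq:fixed_input_mip_lp_equivalence} are achieved and the integrality gap is zero.
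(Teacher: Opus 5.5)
Your proof is correct and is essentially the paper's argument. You use the same layer-by-layer induction, relying only on $s_{k+1}\ge 0$, $z_k\ge 0$ and $W_{1:K-1}\ge 0$, to show that every relaxed point dominates the exact forward pass componentwise, and attainment then follows from the forward-pass point being feasible under valid big-$M$ bounds. Passing the induction through the feasible set of the epigraph LP~\eqref{icnn_lp} is only a repackaging: the paper runs the same induction directly on $\mathcal P_{\mathrm{LP}}$ and makes the epigraph connection afterwards in Corollary~\ref{cor:epigraph_connection}.
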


\begin{proof}
Fix $\bar z_0\in\mathcal Z$, and let $\bar z_k$ denote the activation vector obtained by the exact ICNN forward pass. Consider any feasible point of the LP relaxation with $z_0=\bar z_0$. The equality constraints~\eqref{icnn_mip:con1} imply
    \begin{equation}
        W_{k} z_{k} + S_{k}\bar z_0 + b_{k} = z_{k+1}-s_{k+1}, \qquad k=0,\ldots,K-2.
        \label{eq:lp_balance}
    \end{equation}
Since $s_{k+1}\geq 0$ and $z_{k+1}\geq 0$, we obtain
    \begin{equation}
        z_{k+1} \geq \max\{0,W_k z_k + S_k\bar z_0 + b_k\}, \qquad k=0,\ldots,K-2,
        \label{eq:lp_relu_lower_bound}
    \end{equation}
componentwise.

We now show by induction that every feasible point of the LP relaxation satisfies $z_k \geq \bar z_k$ componentwise. For the first hidden layer, using $W_0\equiv 0$,%
    \begin{equation}
        z_1 \geq \max\{0,S_0\bar z_0+b_0\} = \bar z_1.
        \label{eq:first_layer_dominance}
    \end{equation}
For the induction step, suppose that $z_k\geq \bar z_k$ for some $k=1, \ldots, K-2$. Since $W_{1:K-1}\geq 0$ and the ReLU is non-decreasing, it follows that
    \begin{equation}
        z_{k+1} \geq \max\{0,W_k z_k+S_k\bar z_0+b_k\} \geq \max\{0,W_k\bar z_k+S_k\bar z_0+b_k\} = \bar z_{k+1}. \label{eq:induction_step}
    \end{equation}
For the identity output layer~\eqref{icnn_mip:con7}, since $W_{K-1}\geq 0$ and $z_{K-1}\geq \bar z_{K-1}$, we have
    \begin{equation}
        z_K \geq W_{K-1}\bar z_{K-1}+S_{K-1}\bar z_0+b_{K-1} = \bar z_K = \hat f_{\mathrm{ICNN}}(\bar z_0).
        \label{eq:lp_no_underestimation}
    \end{equation}
Therefore, no LP-feasible point can attain an output below the exact ICNN value.

The exact forward-pass activations are feasible for~\eqref{eq:icnn_mip} with a valid binary activation pattern, and therefore also feasible for its LP relaxation. Hence the value $z_K=\hat f_{\mathrm{ICNN}}(\bar z_0)$ is attainable. Since we are minimising $z_K$ and have already shown that no LP-feasible point can satisfy $z_K<\hat f_{\mathrm{ICNN}}(\bar z_0)$, the LP optimum is exactly $\hat f_{\mathrm{ICNN}}(\bar z_0)$. The same value is attained by solving~\eqref{eq:icnn_mip}, so the two formulations have the same minimum output value and, thus,~\eqref{eq:fixed_input_mip_lp_equivalence} follows.
\end{proof}

Proposition~\ref{prop:exactness} shows that the LP relaxation of the ICNN-MIP recovers the exact network output at every fixed input. We now use this pointwise result to connect the LP relaxation of~\eqref{eq:icnn_mip} to the epigraph formulation~\eqref{icnn_lp}. 

\begin{corollary}[Equivalence of the LP relaxation and the epigraph formulation]
\label{cor:epigraph_connection}
For each fixed input $\bar z_0\in\mathcal Z$, define the value function induced by the LP relaxation of~\eqref{eq:icnn_mip} as
\begin{equation}
    \phi_{\mathrm{LP}}(\bar z_0) = \min\{z_K : (z,s,\delta)\in\mathcal P_{\mathrm{LP}},\ z_0=\bar z_0\}.
    \label{eq:lp_relaxation_value_function}
\end{equation}
Let $\mathcal P_{\mathrm{epi}}$ denote the feasible region of the epigraph formulation~\eqref{icnn_lp}. For every fixed input $\bar z_0\in\mathcal Z$, the LP relaxation of~\eqref{eq:icnn_mip} and the epigraph formulation~\eqref{icnn_lp} have the same minimum output value:
\begin{equation}
    \phi_{\mathrm{LP}}(\bar z_0)
    =
    \min\{z_K : z\in\mathcal P_{\mathrm{epi}},\ z_0=\bar z_0\}
    =
    \hat f_{\mathrm{ICNN}}(\bar z_0).
    \label{eq:lp_epigraph_pointwise_equivalence}
\end{equation}
Consequently, collecting these pointwise values over $\mathcal Z$ generates the epigraph of the ICNN surrogate:
\begin{equation}
    \operatorname{epi}_{\mathcal Z}(\hat f_{\mathrm{ICNN}})
    =
    \{(z_0,\eta): z_0\in\mathcal Z,\ \eta \geq \phi_{\mathrm{LP}}(z_0)\}
    =
    \{(z_0,\eta): z_0\in\mathcal Z,\ \eta \geq \hat f_{\mathrm{ICNN}}(z_0)\}.
    \label{eq:epigraph_from_lp_relaxation}
\end{equation}
\end{corollary}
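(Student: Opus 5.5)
The first equality in \eqref{eq:lp_epigraph_pointwise_equivalence}, $\phi_{\mathrm{LP}}(\bar z_0)=\hat f_{\mathrm{ICNN}}(\bar z_0)$, is exactly Proposition~\ref{prop:exactness}, so it remains to show that the epigraph LP~\eqref{icnn_lp} with $z_0=\bar z_0$ also has optimal value $\hat f_{\mathrm{ICNN}}(\bar z_0)$. I will reuse the same two ingredients as in the proof of Proposition~\ref{prop:exactness}: a componentwise lower bound obtained by induction, and feasibility of the exact forward pass.

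\emph{Lower bound.} Take any $z\in\mathcal P_{\mathrm{epi}}$ with $z_0=\bar z_0$. Constraints \eqref{icnn_lp:con1}--\eqref{icnn_lp:con2} give $z_{k+1}\geq \max\{0,W_kz_k+S_k\bar z_0+b_k\}$ for $k\le K-2$. With $W_0\equiv 0$ this yields $z_1\geq\bar z_1$. Non-negativity of $W_{1:K-1}$ and monotonicity of the ReLU then propagate the bound $z_k\geq\bar z_k$ layer by layer, exactly as in \eqref{eq:induction_step}. For the output layer, $z_K\geq W_{K-1}z_{K-1}+S_{K-1}\bar z_0+b_{K-1}\geq \bar z_K$.

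\emph{Attainment.} The forward-pass vector $(\bar z_1,\dots,\bar z_K)$ satisfies all epigraph inequalities, since $\bar z_{k+1}=\max\{0,\cdot\}$ dominates both arguments. The minimum is therefore $\hat f_{\mathrm{ICNN}}(\bar z_0)$.

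Alternatively, and more structurally, I could project $\mathcal P_{\mathrm{LP}}$ onto the $z$-variables and show that this projection is contained in $\mathcal P_{\mathrm{epi}}$: eliminating $s\geq 0$ turns the equalities into the inequalities of \eqref{icnn_lp:con1}, and $\delta$ only adds further restrictions. The ordering then follows, but the direct computation above is self-contained, so I will use it.

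For \eqref{eq:epigraph_from_lp_relaxation}, the second set equality is immediate once $\phi_{\mathrm{LP}}=\hat f_{\mathrm{ICNN}}$ pointwise on $\mathcal Z$. The first equality is just the definition of the epigraph restricted to $\mathcal Z$, namely $\{(z_0,\eta):z_0\in\mathcal Z,\ \eta\geq \hat f_{\mathrm{ICNN}}(z_0)\}$. If one wants the stronger reading, that the projection of $\{(z_0,z_K)\}$ from $\mathcal P_{\mathrm{LP}}$ onto $(z_0,\eta)$ is upward closed, I would show that $z_K$ can be increased freely. In the epigraph formulation this holds because the last constraint is a one-sided inequality. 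In the LP relaxation it does not hold directly, since $z_K$ is defined by an equality. I would therefore interpret the statement as concerning value functions, as written, and not as a claim about projections.

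The main obstacle is only bookkeeping: matching the index ranges ($z_K$ is unconstrained in sign, and the hidden layers run over $k=1,\dots,K-1$) and making sure the induction starts correctly with $W_0\equiv 0$.
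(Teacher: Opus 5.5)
Your proposal is correct and follows the paper's structure. Proposition~\ref{prop:exactness} gives $\phi_{\mathrm{LP}}=\hat f_{\mathrm{ICNN}}$, exact inference of~\eqref{icnn_lp} gives the middle term, and~\eqref{eq:epigraph_from_lp_relaxation} is then just the pointwise identity. The only difference is that the paper cites the exact-inference property of~\eqref{icnn_lp} from Section~\ref{sec:pre_icnn}, whereas your monotonicity induction plus forward-pass feasibility proves that step directly. Your caveat is also right: the projection of $\mathcal P_{\mathrm{LP}}$ onto $(z_0,z_K)$ is not upward closed, because the big-$M$ bounds cap $z_K$, which is consistent with the value-function reading the paper uses.
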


\begin{proof}
By Proposition~\ref{prop:exactness}, $\phi_{\mathrm{LP}}(\bar z_0)=\hat f_{\mathrm{ICNN}}(\bar z_0)$ for every fixed $\bar z_0\in\mathcal Z$. By the exact-inference property of the epigraph formulation~\eqref{icnn_lp}, established in Section~\ref{sec:pre_icnn}, solving~\eqref{icnn_lp} with $z_0=\bar z_0$ also gives the exact ICNN output. Hence
\begin{equation}
    \phi_{\mathrm{LP}}(\bar z_0)
    =
    \min\{z_K : z\in\mathcal P_{\mathrm{epi}},\ z_0=\bar z_0\}
    =
    \hat f_{\mathrm{ICNN}}(\bar z_0),
\end{equation}
which proves~\eqref{eq:lp_epigraph_pointwise_equivalence}.

Finally, applying the pointwise identity $\phi_{\mathrm{LP}}(z_0)=\hat f_{\mathrm{ICNN}}(z_0)$ for all $z_0\in\mathcal Z$ gives
\begin{equation}
    \{(z_0,\eta): z_0\in\mathcal Z,\ \eta\geq
    \phi_{\mathrm{LP}}(z_0)\}
    =
    \{(z_0,\eta): z_0\in\mathcal Z,\ \eta\geq
    \hat f_{\mathrm{ICNN}}(z_0)\},
\end{equation}
which is the epigraph of $\hat f_{\mathrm{ICNN}}$ over $\mathcal Z$.
\end{proof}

Corollary~\ref{cor:epigraph_connection} establishes that the pointwise minimum outputs of the LP relaxation of~\eqref{eq:icnn_mip}, collected over the full input domain $\mathcal Z$, generate the epigraph of $\hat f_{\mathrm{ICNN}}$ represented by~\eqref{icnn_lp}. We next consider the corresponding optimisation problem in which $z_0$ is itself a decision variable.

\begin{corollary}[Exactness of the ICNN-MIP relaxation under direct output minimisation]
\label{cor:direct_min_exactness}
Consider the embedded optimisation problem in which the ICNN surrogate is minimised directly over its input domain. Embedding the ICNN via the LP relaxation of~\eqref{eq:icnn_mip} gives the same optimal objective value as embedding it via the exact MIP~\eqref{eq:icnn_mip},
\begin{equation}
    \min\{z_K : (z,s,\delta)\in\mathcal{P}_{\mathrm{LP}}\}
    =
    \min\{z_K : (z,s,\delta)\in\mathcal{P}_{\mathrm{MIP}}\}
    =
    \min_{z_0\in\mathcal{Z}}\hat{f}_{\mathrm{ICNN}}(z_0).
    \label{eq:direct_min_lp_mip_equivalence}
\end{equation}
\end{corollary}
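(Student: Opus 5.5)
The plan is to reduce the joint minimisation over $(z_0,z,s,\delta)$ to the fixed-input result of Proposition~\ref{prop:exactness}, by projecting onto the input variable $z_0$. I would write each of the three optimal values as an iterated minimisation: first over $z_0\in\mathcal Z$, then over the remaining variables with $z_0$ fixed. For the LP relaxation this gives
\begin{equation*}
\min\{z_K : (z,s,\delta)\in\mathcal P_{\mathrm{LP}}\}=\min_{\bar z_0\in\mathcal Z}\ \phi_{\mathrm{LP}}(\bar z_0),
\end{equation*}
with $\phi_{\mathrm{LP}}$ as in Corollary~\ref{cor:epigraph_connection}. The MIP value admits the analogous decomposition with $\phi_{\mathrm{MIP}}(\bar z_0)=\min\{z_K:(z,s,\delta)\in\mathcal P_{\mathrm{MIP}},\,z_0=\bar z_0\}$. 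The decomposition is legitimate because the constraint~\eqref{icnn_mip:con4} restricting $z_0$ is exactly the box $\mathcal Z$, and every other constraint involves $z_0$ only as a parameter.

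Next I would invoke Proposition~\ref{prop:exactness}, which gives $\phi_{\mathrm{LP}}(\bar z_0)=\phi_{\mathrm{MIP}}(\bar z_0)=\hat f_{\mathrm{ICNN}}(\bar z_0)$ for every $\bar z_0\in\mathcal Z$. Substituting this pointwise identity into both iterated minima yields~\eqref{eq:direct_min_lp_mip_equivalence}. As a sanity check, the inequality ``LP $\le$ MIP'' is immediate since $\mathcal P_{\mathrm{MIP}}\subseteq\mathcal P_{\mathrm{LP}}$. The reverse inequality is the content of the proposition's lower bound $z_K\ge\hat f_{\mathrm{ICNN}}(z_0)$, which holds at every LP-feasible point. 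Applying that bound at each point of $\mathcal P_{\mathrm{LP}}$ gives $z_K\ge\min_{\mathcal Z}\hat f_{\mathrm{ICNN}}$ directly, without needing the interchange of minima at all.

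The main point of care is attainment: the minima must exist so that ``$\min$'' rather than ``$\inf$'' is justified. I would handle this as follows. $\hat f_{\mathrm{ICNN}}$ is continuous, being a composition of affine maps and ReLUs, and $\mathcal Z$ is a compact box, so a minimiser $z_0^\ast$ exists. The exact forward pass at $z_0^\ast$, together with its binary activation pattern, is feasible for $\mathcal P_{\mathrm{MIP}}$, as noted in the proof of Proposition~\ref{prop:exactness}, provided the big-$M$ bounds $U_{k,j},L_{k,j}$ are valid over $\mathcal Z$. That point is also in $\mathcal P_{\mathrm{LP}}$ and attains $z_K=\hat f_{\mathrm{ICNN}}(z_0^\ast)$. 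Combined with the uniform lower bound from the previous step, this point is optimal for both problems, which completes the argument.
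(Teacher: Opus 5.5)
Your proposal is correct and follows the paper's own proof. Both arguments reduce the joint minimisation to the pointwise identity $\phi_{\mathrm{LP}}(\bar z_0)=\hat f_{\mathrm{ICNN}}(\bar z_0)$ from Proposition~\ref{prop:exactness} (the paper routes it through Corollary~\ref{cor:epigraph_connection}), minimise over $\mathcal Z$, and get the MIP case because the MIP encodes the forward pass exactly. Your attainment argument, using continuity of $\hat f_{\mathrm{ICNN}}$ on the compact box, the uniform bound $z_K\ge\hat f_{\mathrm{ICNN}}(z_0)$ at every LP-feasible point, and validity of the big-$M$ constants, spells out what the paper leaves implicit when it writes $\min$ rather than $\inf$.
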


\begin{proof}
By Corollary~\ref{cor:epigraph_connection}, the pointwise value induced by the LP relaxation satisfies $\phi_{\mathrm{LP}}(z_0)=\hat f_{\mathrm{ICNN}}(z_0)$ for every $z_0\in\mathcal Z$. Minimising these pointwise values over $\mathcal Z$ gives
\begin{equation}
    \min\{z_K : (z,s,\delta)\in\mathcal{P}_{\mathrm{LP}}\}
    =
    \min_{z_0\in\mathcal{Z}}\phi_{\mathrm{LP}}(z_0)
    =
    \min_{z_0\in\mathcal{Z}}\hat{f}_{\mathrm{ICNN}}(z_0).
\end{equation}
The same identity holds for $\mathcal{P}_{\mathrm{MIP}}$ because \eqref{eq:icnn_mip} encodes the ICNN forward pass exactly.
\end{proof}

Corollary~\ref{cor:direct_min_exactness} is the precise sense in which the LP relaxation is exact for direct minimisation of the ICNN output. Fractional activation patterns may exist in $\mathcal{P}_{\mathrm{LP}}$, but none can improve the objective below the true ICNN function value. Thus, the binary activation constraints can be relaxed without changing the optimal objective value.

\subsection{Relaxation-gap comparison with FNN-MIP formulations}
\label{sec:icnn_fnn_mip_comparison}

The results above establish a chain of equivalences among the ICNN forward pass, the ICNN-MIP formulation~\eqref{eq:icnn_mip}, its LP relaxation, and the epigraph formulation~\eqref{icnn_lp}, which all agree on the minimum output value, whether the input is fixed or optimised directly. Proposition~\ref{prop:exactness} shows that, when $z_0$ is fixed, minimising $z_K$ over the LP relaxation recovers the exact ICNN value. Corollary~\ref{cor:epigraph_connection} connects this property to the epigraph representation, and Corollary~\ref{cor:direct_min_exactness} shows that the relaxation remains exact when the ICNN output is minimised directly over its input domain.

This exactness property does not extend to FNN-MIP formulations. We now compare ICNN-MIP and FNN-MIP formulations from the perspective of their integrality gaps. This is the relevant comparison in the setting where ICNN and FNN surrogates are trained separately on the same input-output data and then embedded into the same downstream optimisation problem. Since the trained networks generally represent different functions, their absolute LP relaxation bounds are not directly comparable as formulation-level quantities. The meaningful comparison is instead whether the LP relaxation of each formulation can underestimate the output of its own trained network.

For the ICNN-MIP formulation, it follows that such underestimation cannot occur when the output is minimised, giving a zero root gap in this setting. In contrast, no analogous guarantee holds for a standard FNN-MIP formulation. Since FNN weights are unrestricted in sign, the induction argument underlying Proposition~\ref{prop:exactness} breaks down. The relaxed values of the intermediate variables $z_k$ can interact with negative weight entries in subsequent layers and produce an underestimation of the surrogate output. The FNN-MIP relaxation is therefore only guaranteed to provide a lower bound on its own exact MIP optimum. 

To illustrate the distinction established above, we approximate $f(x) = x^2$ over $[-2,2]$ using both a standard FNN and an ICNN with comparable accuracy (two hidden layers with four neurons each). 
For each surrogate, we construct the corresponding MIP formulation with objective $\min z_K$. For each $x \in [-2,2]$ on a uniform grid with step size 0.01, we set $z_0 = x$ in each MIP formulation, solve the resulting LP relaxation, and read off the output $z_K$, denoted $f^{\mathrm{LP}}_\mathrm{ICNN}(x)$ and $f^{\mathrm{LP}}_\mathrm{FNN}(x)$ respectively.
The left panel of Figure~\ref{fig:lp_gap} shows that $f^{\mathrm{LP}}_\mathrm{FNN}(x)$ underestimates $\hat f_\mathrm{FNN}(x)$, whereas $f^{\mathrm{LP}}_\mathrm{ICNN}(x)$ coincides with $\hat f_\mathrm{ICNN}(x)$ across the domain, confirming Proposition~\ref{prop:exactness}. 
The right panel quantifies the resulting pointwise relaxation gap. The FNN-MIP exhibits a substantial gap, while the ICNN-MIP gap is zero throughout.

\begin{figure}[ht!]
    \centering
    \includegraphics[width=0.95\linewidth]{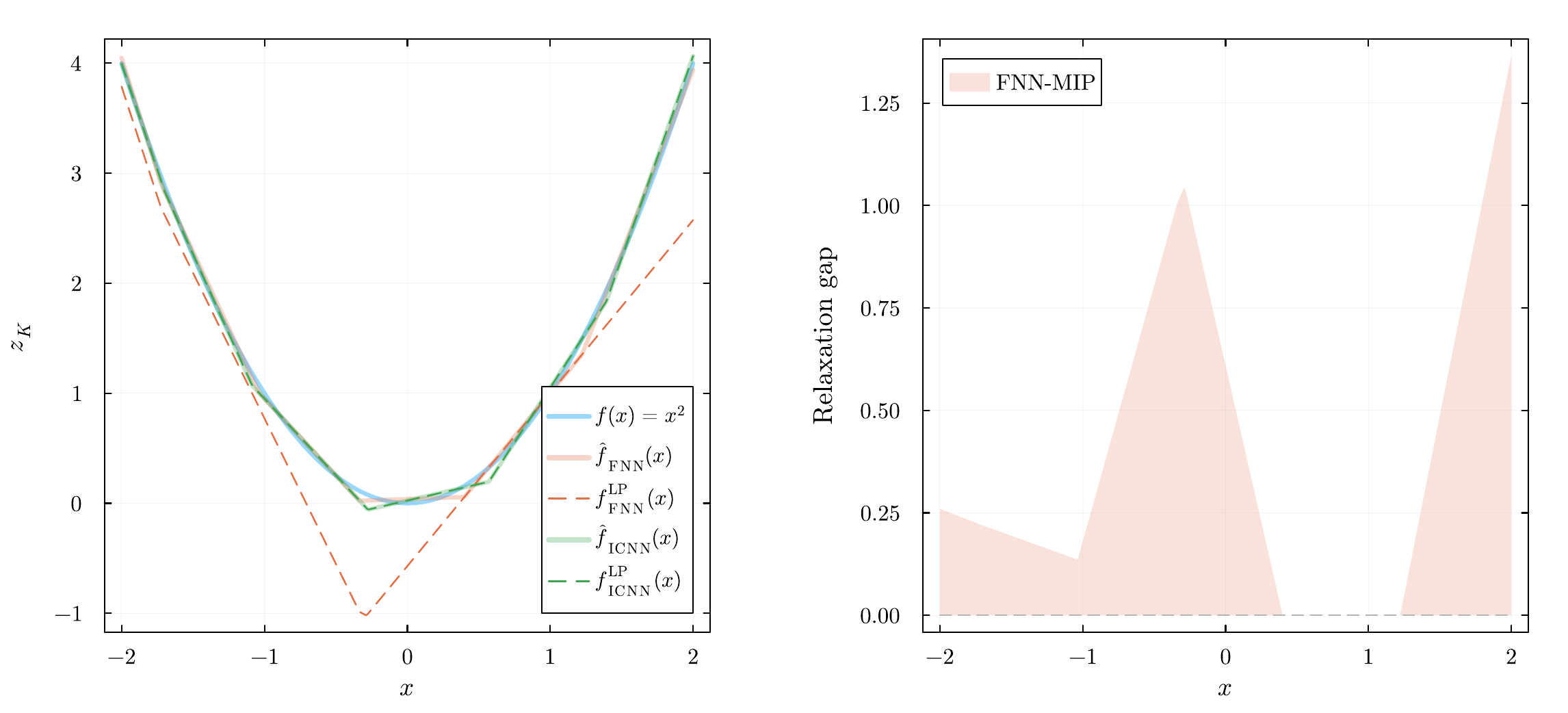}
    \caption{Neural surrogates and their LP relaxations for $f(x)=x^2$ over $[-2,2]$ under direct output minimisation. Left: surrogate outputs and LP relaxation envelopes. Right: pointwise relaxation gap for FNN-MIP, with a dashed line marking zero gap for ICNN-MIP.}
    \label{fig:lp_gap}
\end{figure}

In more general embeddings, where the surrogate output is not minimised with positive costs, or where additional constraints involving the surrogate output prevent the optimiser from driving it to its smallest feasible value, the LP relaxation of the ICNN-MIP embedding may no longer be exact. Nevertheless, because the ICNN architecture itself benefits from a tight LP relaxation, it tends to result in stronger LP relaxations than FNN-MIP formulations. This provides a structural rationale for the tighter root node relaxations, fewer branching decisions, and faster convergence that ICNN-MIP formulations can exhibit, as observed in the computational experiments in Section~\ref{sec:exp}.

\section{A specialised branch-and-bound algorithm}
\label{sec:icnn_bb}

This section develops the second computational route introduced in Section~\ref{sec:pre_embedding}, which is specific to ICNNs. Instead of encoding activations with binary variables, each ReLU unit can be replaced by its LP epigraph representation, yielding an LP-based reformulation of the embedded problem~\eqref{eq:orig}. We first formalise the conditions under which such a reformulation is valid. For the cases where it is not, we construct the strongest continuous relaxation of the ICNN's graph by combining the epigraph with a concave envelope and develop a specialised BB algorithm that exploits this relaxation to achieve convergence via LP solves throughout the search tree.

\subsection{Validity of the epigraph embedding} \label{sec:validity}

Integrating the epigraph representation of the ICNN into the general embedded problem~\eqref{eq:orig} gives%
\begin{mini!}
    {x,y}{h(x,y) }{\label{eq:icnn-full}}{}
    \addConstraint{g(x,y)}{ \leq 0, \quad}{}
    \addConstraint{x}{= z_0, \quad}{}
    \addConstraint{z_{k+1}}{\geq W_k z_k + S_kz_0 + b_k, \quad}{k = 0, \ldots, K-1,}
    \addConstraint{z_k}{\geq 0, \quad}{k = 1, \ldots, K-1,}
    \addConstraint{y}{=z_K, \quad}{}
    \addConstraint{x}{\in \mathcal{X}, \quad}{}
\end{mini!}
where $z_0,\dots,z_K$ are auxiliary variables and $\{W_k,S_k,b_k\}_{k=0}^{K-1}$ are trained NN parameters.

This embedding introduces the ICNN only through linear constraints. However, because the relation $y=\hat f(x)$ in~\eqref{eq:orig} is replaced by epigraph inequalities $y\geq\hat f(x)$, the feasible region of~\eqref{eq:icnn-full} is a relaxation of~\eqref{eq:orig}. Whether this relaxation is tight, and hence whether~\eqref{eq:icnn-full} is a valid reformulation of~\eqref{eq:orig}, is the central question. Definition~\ref{def:validity} formalises what we mean by the epigraph formulation being valid.%

\begin{definition}[validity of the LP epigraph embedding]\label{def:validity}
The epigraph embedding~\eqref{eq:icnn-full} is valid with respect to the original problem~\eqref{eq:orig} if
\begin{enumerate}
    \item[(i)] the two problems have the same optimal objective value, and
    \item[(ii)] the optimal solution $(x^*,y^*)$ of~\eqref{eq:icnn-full} satisfies $y^* = \hat{f}(x^*)$, i.e., the ICNN constraint is active at optimality.
\end{enumerate}
\end{definition}

This validity issue is not specific to the LP epigraph embedding. It also arises in the LP relaxation of the ICNN-MIP formulation~\eqref{eq:icnn_mip}. As shown in Corollary~\ref{cor:epigraph_connection}, when $z_K$ is minimised, the LP relaxation induces the same pointwise value as the ICNN epigraph representation and recovers the exact ICNN output. Thus, under direct minimisation of $\gamma z_K$, both formulations are exact. Any overestimation of $z_K$ is penalised by the objective, and the optimisation is driven to the boundary $z_K=\hat f(z_0)$. In more general settings, the surrogate output may appear in equality constraints, lower-bound constraints, coupling constraints, or objective terms that reward larger values of $y$, allowing the optimiser to exploit the slack in the epigraph inequalities. In other words, it may admit solutions where $z_K > \hat{f}(z_0)$, meaning that the surrogate output is overestimated. 

Although the equivalence between the LP relaxation of the ICNN-MIP and the epigraph embedding is preserved, the LP relaxation of the ICNN-MIP is no longer necessarily exact with respect to the ICNN-MIP formulation itself. That is, the relaxed activation variables may allow for $z_k$ values that do not correspond to the exact ICNN forward pass. Hence, both the LP epigraph embedding and the LP relaxation of the ICNN-MIP formulation may fail to enforce the relation $y=\hat f(x)$, and~\eqref{eq:icnn-full} becomes an invalid reformulation of~\eqref{eq:orig}.

The following result makes the validity condition operational. If the equality $y = \hat f(x)$ in~\eqref{eq:orig} can be equivalently replaced by $y \geq \hat f(x)$, that is, if the replacement preserves both conditions (i) and (ii) of Definition~\ref{def:validity}, then the epigraph embedding~\eqref{eq:icnn-full} is valid. This holds, for instance, when $h$ and $g$ are non-decreasing in $y$, since any slack in $y \geq \hat f(x)$ can then be removed without worsening the objective or violating feasibility. 

In practice, whether the replacement is admissible for a given problem instance is not easy to verify a priori. To detect and, if necessary, close the gap between the epigraph relaxation and the true ICNN output, we construct a concave envelope that provides an upper bound on the ICNN output, complementing the lower bound from the epigraph. Together, these two bounds form the basis of a specialised BB algorithm.

\subsection{Construction of the strongest continuous relaxation}

This section constructs the concave envelope from the convexity of $\hat f$ and combines it with the epigraph embedding. The resulting relaxation is the strongest continuous relaxation of the ICNN formulation over the box domain (Theorem~\ref{thm:strongest_relax}).

\subsubsection{A concave envelope for ICNN outputs}

Over the box domain $\mathcal X$ introduced above, $\hat f$ is bounded above by convex combinations of its values at the vertices of $\mathcal X$. We begin by defining notation for the vertex set.

\begin{definition}[Vertices of the box domain]\label{def:vertices}
Let the set of points $\{\tilde{x}^i\}_{i=1}^{2^n} \subset \mathbb{R}^n$ be defined by all the different combinations of the variables' upper and lower bounds. That is, $\{\tilde{x}^i\}_{i=1}^{2^n}$ are the $2^n$ extreme points (vertices) of $\mathcal{X}$.
\end{definition}

Given the vertex set, the concave envelope of $\hat{f}$ admits an explicit lifted representation as the upper bound defined by convex combinations of vertex values, as formalised below.

\begin{proposition}[Concave envelope of the ICNN function]
\label{prop:concave_env}
Let $\hat{f}:\mathcal{X}\to\mathbb{R}$ denote the trained ICNN over the box $\mathcal{X}$, and let $\{\tilde{x}^i\}_{i=1}^{2^n}$ denote the vertices of $\mathcal{X}$ (Definition~\ref{def:vertices}). 
The concave envelope of $\hat{f}$ over $\mathcal{X}$ is the value function of the following LP:
\begin{maxi!}
    {\alpha,\,\bar{z}}{\bar{z}}{\label{eq:concave_env}}{\bar{z}(x) =}
    \addConstraint{\sum_{i=1}^{2^n}\alpha_i\tilde{x}^i}{= x,}{\label{eq:ce_convex_comb}}
    \addConstraint{\sum_{i=1}^{2^n}\alpha_i}{= 1,}{\label{eq:ce_partition}}
    \addConstraint{\bar{z}}{= \sum_{i=1}^{2^n}\alpha_i\,\hat{f}(\tilde{x}^i),}{\label{eq:ce_output}}
    \addConstraint{\alpha_i}{\geq 0,\quad}{ i=1,\ldots,2^n, \label{eq:ce_nonneg}}
\end{maxi!}
where $(\alpha_1,\ldots,\alpha_{2^n})\in\mathbb{R}^{2^n}$ are auxiliary variables and $\hat{f}(\tilde{x}^i)$ are precomputed ICNN outputs at the vertices. 
\end{proposition}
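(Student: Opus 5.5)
The plan is to show that the LP value function $\bar z(x)$ coincides with the concave envelope $\operatorname{cav}\hat f(x)=\inf\{g(x): g \text{ concave on } \mathcal X,\ g\geq \hat f \text{ on } \mathcal X\}$. I will prove this by establishing two inequalities, $\bar z\le \operatorname{cav}\hat f$ and $\bar z\ge \operatorname{cav}\hat f$. First I record that the LP is feasible for every $x\in\mathcal X$, because every point of a box is a convex combination of its vertices. The LP is also bounded, since $\bar z$ is a convex combination of the finitely many numbers $\hat f(\tilde x^i)$. Hence $\bar z(x)$ is well defined and attained on $\mathcal X$.

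\textbf{Step 1: $\bar z$ is a concave majorant of $\hat f$.} Concavity of $\bar z$ follows from the standard argument for parametric LPs with the parameter in the right-hand side. Take optimal weights $\alpha,\alpha'$ for $x,x'$. For $\lambda\in[0,1]$, the combination $\lambda\alpha+(1-\lambda)\alpha'$ is feasible for $\lambda x+(1-\lambda)x'$. Its objective value is $\lambda\bar z(x)+(1-\lambda)\bar z(x')$, which gives the concavity inequality. For the majorant property I use the convexity of the ICNN from Section~\ref{sec:pre_icnn}. For any feasible $\alpha$, Jensen's inequality gives $\hat f(x)=\hat f(\sum_i\alpha_i\tilde x^i)\le\sum_i\alpha_i\hat f(\tilde x^i)$, so $\hat f(x)\le\bar z(x)$. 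Since $\bar z$ is one admissible $g$ in the infimum, $\operatorname{cav}\hat f\le\bar z$.

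\textbf{Step 2: every concave majorant dominates $\bar z$.} Let $g$ be concave with $g\ge\hat f$ on $\mathcal X$, and let $\alpha$ be optimal at $x$. Concavity of $g$ followed by the majorant property gives
\begin{equation*}
g(x)=g\Bigl(\sum_i\alpha_i\tilde x^i\Bigr)\ge\sum_i\alpha_i g(\tilde x^i)\ge\sum_i\alpha_i\hat f(\tilde x^i)=\bar z(x).
\end{equation*}
Taking the infimum over $g$ yields $\operatorname{cav}\hat f\ge\bar z$, which completes the identification.

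I expect the main obstacle to be conceptual rather than computational. The point is to see why the vertex values alone suffice, that is, why no interior point of $\mathcal X$ could push the envelope higher. One route is the general representation of the concave envelope as a supremum over all convex combinations of arbitrary points of $\mathcal X$. Convexity of $\hat f$ then lets one replace each point $x^j$ by its vertex decomposition without decreasing $\sum_j\beta_j\hat f(x^j)$. The two-inequality argument above avoids this step, because Step 2 never uses interior points. I would nevertheless remark that this is exactly where input convexity is essential: for a general ReLU network, the maximum of $\sum_j\beta_j\hat f(x^j)$ need not be attained at vertices.
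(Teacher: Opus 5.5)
Your proposal is correct and follows essentially the same route as the paper: feasibility from the vertex representation of the box, the majorant property via Jensen's inequality using convexity of the ICNN, concavity by convexly combining optimal weight vectors, and minimality by applying concavity of an arbitrary concave overestimator at an optimal vertex decomposition. Your explicit boundedness check and your use of the infimum definition of the envelope are small additions that the paper leaves implicit.
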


\begin{proof}
Since every $x \in \mathcal{X}$ can be written as a convex combination of the vertices $\{\tilde{x}^i\}_{i=1}^{2^n}$, the LP is always feasible on $\mathcal{X}$.
For any feasible $\alpha$, convexity of $\hat{f}$ and~\eqref{eq:ce_convex_comb}--\eqref{eq:ce_output} give
\begin{equation}
    \bar{z} = \sum_i \alpha_i \hat{f}(\tilde{x}^i)
            \geq \hat{f}\!\left(\sum_i \alpha_i \tilde{x}^i\right) = \hat{f}(x),
\end{equation}
so $\bar{z}(x) \geq \hat{f}(x)$ for every $x \in \mathcal{X}$.
To see that $\bar{z}(\cdot)$ is concave, take any $x_1, x_2 \in \mathcal{X}$ and $\theta \in [0,1]$, and let $\alpha^1$, $\alpha^2$ be optimal for $x_1$ and $x_2$ respectively. Set $\alpha = \theta\alpha^1 + (1-\theta)\alpha^2 \geq 0$, and it is feasible for $x_\theta = \theta x_1 + (1-\theta)x_2$ since
\begin{align}
    &\textstyle\sum_i \alpha_i \tilde{x}^i
        = \theta\sum_i \alpha^1_i \tilde{x}^i
        + (1-\theta)\sum_i \alpha^2_i \tilde{x}^i
        = \theta x_1 + (1-\theta)x_2 = x_\theta, \\
    &\textstyle\sum_i \alpha_i
        = \theta\sum_i \alpha^1_i
        + (1-\theta)\sum_i \alpha^2_i
        = \theta \times 1 + (1-\theta) \times 1 = 1.
\end{align}
The value $\bar{z}(x_\theta)$ is the maximum over all feasible solutions and is therefore at least as large as the objective at this particular feasible $\alpha$, giving
\begin{equation}
    \bar{z}(x_\theta) \geq \sum_i\alpha_i\hat{f}(\tilde{x}^i)
        = \theta\bar{z}(x_1) + (1-\theta)\bar{z}(x_2),
\end{equation}
which establishes concavity of $\bar{z}(\cdot)$.
To show tightness, let $\varphi$ be any concave function satisfying $\varphi(x) \geq \hat{f}(x)$ for all $x \in \mathcal{X}$. In particular, $\varphi(\tilde{x}^i) \geq \hat{f}(\tilde{x}^i)$ at every vertex. For any $x \in \mathcal{X}$, let $\alpha^*$ be the optimal solution of the LP at $x$. By concavity of $\varphi$,
\begin{equation}
    \varphi(x) = \varphi\!\left(\sum_i \alpha^*_i \tilde{x}^i\right)
        \geq \sum_i \alpha^*_i \varphi(\tilde{x}^i)
        \geq \sum_i \alpha^*_i \hat{f}(\tilde{x}^i) = \bar{z}(x),
\end{equation}
so no concave overestimator is tighter than $\bar{z}(\cdot)$.
\end{proof}

Formulation~\eqref{eq:concave_env} requires $2^n$ auxiliary variables $\alpha_i$ and the evaluation of the ICNN at all $2^n$ vertices. Each vertex evaluation amounts to a forward pass through the network and is computationally cheap. The dominant scaling of the construction is therefore governed by the input dimension $n$ of the ICNN rather than the number of neurons, as is the case in classical MIP formulations. 
We note that this tractable construction of the concave envelope is a distinctive property of the ICNN architecture and does not extend to general ReLU FNNs, whose non-convex output makes the construction of tight concave overestimators an open problem.

\subsubsection{Augmented epigraph embedding}

Augmenting the epigraph embedding~\eqref{eq:icnn-full} with the concave envelope~\eqref{eq:concave_env} yields a problem in which the ICNN output $z_K$ is sandwiched between a lower bound (from the epigraph) and an upper bound (from the concave envelope):%
\begin{mini!}
    {x,y,\alpha}{h(x,y)}{\label{eq:envelope_relax}}{}
    \addConstraint{g(x,y)}{ \leq 0,}{\label{eq:er_orig_constr}}
    \addConstraint{y}{= z_{K},}{\label{eq:er_output_link}}
    \addConstraint{x}{= z_0,}{\label{eq:er_input_link}}
    \addConstraint{z_{k+1}}{\geq W_k z_k + S_k z_0 + b_k, \quad}{k=0, \ldots ,K-1,  \label{eq:er_epigraph_affine}}
    \addConstraint{z_k}{\ge 0, \quad}{k=1, \ldots ,K-1,  \label{eq:er_epigraph_nonneg}}
    \addConstraint{x}{= \sum_{i=1}^{2^n}\alpha_i\tilde{x}^i,}{\label{eq:er_convex_comb}}
    \addConstraint{\sum_{i=1}^{2^n}\alpha_i}{= 1,}{\label{eq:er_partition}}
    \addConstraint{z_{K}}{\leq \sum_{i=1}^{2^n} \alpha_i \hat{f}(\tilde{x}^i),}{\label{eq:er_envelope}}
    \addConstraint{\alpha_i}{\geq 0, \quad}{i=1,\dots,2^n, \label{eq:er_nonneg}}
    \addConstraint{x}{\in \mathcal{X}.}{}
\end{mini!}

The role of constraints~\eqref{eq:er_convex_comb}--\eqref{eq:er_nonneg} is to impose the concave-envelope upper bound described in Proposition~\ref{prop:concave_env}. Although the envelope value $\bar z(x)$ is defined by a maximisation problem in~\eqref{eq:concave_env}, the condition $z_K \leq \bar z(x)$ for a given pair $(x,z_K)$ does not require solving this maximisation explicitly inside~\eqref{eq:envelope_relax}. Instead, it can be represented by requiring the existence of a convex decomposition of the same input $x$ such that $z_K$ is no larger than the corresponding convex combination of vertex values, as enforced by~\eqref{eq:er_convex_comb}--\eqref{eq:er_nonneg}. 
Indeed, for a given pair $(x,z_K)$, if such an $\alpha$ exists, then
\begin{equation}
    z_K \leq \sum_i \alpha_i \hat f(\tilde x^i) \leq \bar z(x),
\end{equation}
where the first inequality follows from~\eqref{eq:er_envelope} and the second from the definition of $\bar z(x)$ as the maximum in~\eqref{eq:concave_env}.
Conversely, suppose that a given pair $(x,z_K)$ satisfies $z_K\leq \bar z(x)$, and let $\alpha^*$ be an optimal solution of~\eqref{eq:concave_env} for this input $x$. Then $\alpha^*$ satisfies~\eqref{eq:er_convex_comb}, \eqref{eq:er_partition}, and
\eqref{eq:er_nonneg}. Moreover,
\begin{equation}
    z_K \leq \bar z(x) = \sum_i \alpha_i^* \hat f(\tilde x^i),
\end{equation}
so~\eqref{eq:er_envelope} also holds. Thus, the same $\alpha^*$ satisfies the envelope constraints in~\eqref{eq:envelope_relax}. 
It follows that, for any given pair $(x,z_K)$ with $x\in\mathcal X$, there exists $\alpha$ satisfying the envelope constraints~\eqref{eq:er_convex_comb}--\eqref{eq:er_nonneg} if and only if $z_K\leq \bar z(x)$. Therefore, the concave-envelope upper bound on $z_K$ is exactly imposed.

Together with the ICNN epigraph constraints~\eqref{eq:er_input_link}--\eqref{eq:er_epigraph_nonneg}, this gives a relaxation in which $z_K$ is bounded below by the epigraph representation and above by the concave envelope. The formulation therefore introduces no binary variables or nonlinearities beyond those already present in~\eqref{eq:orig}, and serves as the node subproblem for the proposed BB algorithm. 
Moreover, this two-sided bounding is not merely one valid bounding scheme. It is the strongest continuous relaxation available over the box domain, as the following result formalises.

\begin{theorem}[Strongest continuous relaxation of the ICNN's graph] \label{thm:strongest_relax}
Let $\hat{f}:\mathcal{X}\to\mathbb{R}$ denote the trained ICNN over the box $\mathcal{X}$, and let $\bar z(\cdot)$ denote its concave envelope over $\mathcal{X}$ as characterised in Proposition~\ref{prop:concave_env}. Define
\begin{equation}
    \mathcal{F} = \{(x,y) \in \mathcal{X} \times \mathbb{R} :
    \exists\, (z, \alpha) \text{ satisfying }
    \eqref{eq:er_output_link}\text{--}\eqref{eq:er_nonneg}\}
    \label{eq:feasible_proj}
\end{equation}
as the projection onto $(x,y)$ of the feasible set defined by the epigraph and envelope constraints of~\eqref{eq:envelope_relax}. Then
\begin{equation}
    \mathcal{F}
    = \{(x,y) : x\in\mathcal{X},\ \hat{f}(x) \leq y \leq \bar{z}(x)\}
    = \operatorname{conv}\left(\{(x, \hat{f}(x)) : x \in
    \mathcal{X}\}\right),
    \label{eq:hull_identity}
\end{equation}
that is, $\mathcal{F}$ is the convex hull of the graph of $\hat{f}$ over $\mathcal{X}$. Consequently, \eqref{eq:envelope_relax} is the strongest continuous relaxation of~\eqref{eq:orig} obtainable by relaxing the ICNN constraint alone.
\end{theorem}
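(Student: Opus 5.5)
The proof has two equalities to establish, and I would prove them separately.

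\emph{First equality.} Fix $(x,y)\in\mathcal X\times\mathbb R$. The constraints split into two groups. The epigraph group~\eqref{eq:er_output_link}--\eqref{eq:er_epigraph_nonneg} involves only $z$. The envelope group~\eqref{eq:er_convex_comb}--\eqref{eq:er_nonneg} involves $\alpha$ and also $z_K=y$.

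For the epigraph group, I would reuse the induction from Proposition~\ref{prop:exactness}, or equivalently Corollary~\ref{cor:epigraph_connection}. If $z$ is feasible with $z_0=x$, then $z_K\ge\hat f(x)$, so $y\ge \hat f(x)$. The induction needs $W_{1:K-1}\ge 0$ and the monotonicity of ReLU.

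Conversely, suppose $y\ge \hat f(x)$. Take the exact forward-pass activations $\bar z_1,\dots,\bar z_{K-1}$ and set $z_K=y$. The last constraint $z_K\ge W_{K-1}\bar z_{K-1}+S_{K-1}x+b_{K-1}=\hat f(x)$ then holds, and all earlier constraints hold with equality or through the ReLU. So the epigraph group is satisfiable with $z_0=x$ and $z_K=y$ exactly when $y\ge\hat f(x)$.

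For the envelope group, the discussion preceding the theorem already shows that a suitable $\alpha$ exists if and only if $y\le\bar z(x)$. Intersecting the two conditions gives the first equality. I would also note that $\bar z(x)\ge\hat f(x)$ by Proposition~\ref{prop:concave_env}, so the set is nonempty above every point of $\mathcal X$.

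\emph{Second equality.} Write $G=\{(x,\hat f(x)):x\in\mathcal X\}$ and $S=\{(x,y):x\in\mathcal X,\ \hat f(x)\le y\le\bar z(x)\}$.

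For $\operatorname{conv}(G)\subseteq S$, I would show that $S$ is convex and contains $G$. Containment follows from $\hat f\le\bar z$. Convexity holds because $S$ is the intersection of the epigraph of the convex function $\hat f$, the hypograph of the concave function $\bar z$, and the convex set $\mathcal X\times\mathbb R$.

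For $S\subseteq\operatorname{conv}(G)$, fix $(x,y)\in S$. The point $(x,\hat f(x))$ lies in $G$. Let $\alpha^*$ be optimal in~\eqref{eq:concave_env} at $x$. Then $(x,\bar z(x))=\sum_i\alpha_i^*(\tilde x^i,\hat f(\tilde x^i))$, and each $(\tilde x^i,\hat f(\tilde x^i))$ lies in $G$, so $(x,\bar z(x))\in\operatorname{conv}(G)$. Since $y\in[\hat f(x),\bar z(x)]$, the point $(x,y)$ is a convex combination of these two points, and convexity of $\operatorname{conv}(G)$ finishes the argument.

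\emph{Consequence.} Any continuous relaxation that relaxes only the constraint $y=\hat f(x)$ replaces $G$ by some convex set containing $G$, and every such set contains $\operatorname{conv}(G)=\mathcal F$. Hence $\mathcal F$ is the smallest such set, and~\eqref{eq:envelope_relax} is the tightest relaxation of this kind.

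\emph{Main obstacle.} The step needing most care is the converse in the epigraph part: showing that any $y\ge\hat f(x)$ is reachable, not just $y=\hat f(x)$. The key point is that only the final output variable $z_K$ has to absorb the slack, so the intermediate activations can stay at their forward-pass values.
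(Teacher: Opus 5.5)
Your proposal is correct and follows the paper's proof essentially step for step. It uses the same split into epigraph and envelope constraint groups sharing only $x$ and $z_K$, the same convexity-plus-containment argument for $\operatorname{conv}(\mathcal{G})\subseteq\mathcal{S}$, the same interpolation between $(x,\hat f(x))$ and the vertex combination $(x,\bar z(x))$, and the same minimality argument for the final claim. Your explicit construction for the epigraph converse (forward-pass activations, with only $z_K$ absorbing the slack) spells out what the paper leaves implicit when it says $z_K$ is bounded only from below.
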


\begin{proof}
Let $\mathcal{S} = \{(x,y) : x\in\mathcal{X},\ \hat{f}(x) \leq y \leq \bar{z}(x)\}$  and let $\mathcal{G} = \{(x, \hat{f}(x)) : x \in \mathcal{X}\}$ denote the graph of $\hat f$ over $\mathcal{X}$, so that~\eqref{eq:hull_identity} reads $\mathcal{F} = \mathcal{S} = \operatorname{conv}(\mathcal{G})$.

For the first equality, any feasible point of~\eqref{eq:feasible_proj} satisfies $\hat{f}(x) \leq y \leq \bar{z}(x)$, so $\mathcal{F} \subseteq \mathcal{S}$. Conversely, any $(x,y) \in \mathcal{S}$ belongs to $\mathcal{F}$, since the epigraph constraints admit any $y \geq \hat{f}(x)$ (Corollary~\ref{cor:epigraph_connection}, $z_K$ being bounded only from below) and the envelope constraints admit any $y \leq \bar{z}(x)$, as shown following~\eqref{eq:envelope_relax}. 
As the two constraint groups share only the variables $x$ and $z_K$, the corresponding feasible points combine into a single feasible point for~\eqref{eq:feasible_proj}. Hence $\mathcal{S} \subseteq \mathcal{F}$ and, thus, $\mathcal{F} = \mathcal{S}$.

For the second equality, the set $\mathcal{S}$ is convex, since $\hat{f}$ is convex and $\bar{z}(\cdot)$ is concave (Proposition~\ref{prop:concave_env}), and $\mathcal{G} \subseteq \mathcal{S}$, so $\operatorname{conv}(\mathcal{G}) \subseteq \mathcal{S}$. Conversely, take any $(x, y) \in \mathcal{S}$ and let $\alpha^*$ be an optimal solution of~\eqref{eq:concave_env} at $x$, so that $(x, \bar{z}(x)) = \sum_{i=1}^{2^n} \alpha^*_i (\tilde{x}^i, \hat{f}(\tilde{x}^i))$ is a convex combination of points of $\mathcal{G}$. 
Since $\hat{f}(x) \leq y \leq \bar{z}(x)$, there exists $\theta \in [0,1]$ such that $y = \theta \hat{f}(x) + (1-\theta)\bar{z}(x)$, and therefore
\begin{equation}
    (x, y) = \theta\, (x, \hat{f}(x)) + (1-\theta) \sum_{i=1}^{2^n} \alpha^*_i\, (\tilde{x}^i, \hat{f}(\tilde{x}^i))
\end{equation}
is a convex combination of points of $\mathcal{G}$, with non-negative weights $\theta$ and $(1-\theta)\alpha^*_i$ summing to one. 
Hence $\mathcal{S} \subseteq \operatorname{conv}(\mathcal{G})$, proving~\eqref{eq:hull_identity}. 

The final claim follows since any convex continuous relaxation of $y = \hat{f}(x)$, $x \in \mathcal{X}$ must contain $\mathcal{G}$, and a convex set containing $\mathcal{G}$ contains $\operatorname{conv}(\mathcal{G}) = \mathcal{F}$. Since \eqref{eq:envelope_relax} attains $\mathcal{F}$ exactly, it is the strongest such relaxation.
\end{proof}

Theorem~\ref{thm:strongest_relax} concerns the convex hull of the input-output graph of the entire ICNN. This is a stronger property than existing tightening techniques for ReLU network formulations provide. The strongest known MIP formulation, the extended formulation of~\citet{anderson_strong_2020}, attains the convex hull of the graph of each individual ReLU neuron, yet the intersection of per-neuron hulls can remain a weak relaxation of the network's input-output mapping. For a general ReLU network, no tractable characterisation of the convex hull of the graph is known, whereas for ICNNs it is delivered by $2^n$ forward passes and linear constraints.

\subsection{Branch-and-bound procedure}

We now describe how the relaxation in~\eqref{eq:envelope_relax} is used within a BB framework. At the root node, the algorithm first solves the epigraph embedding~\eqref{eq:icnn-full} directly, without constructing the concave envelope.  
When a relaxation solution is obtained, its feasibility with respect to the original ICNN constraint is assessed by comparing the output variable $z_K^*$ from the relaxation with the true ICNN output $\hat{f}(x^*)$ obtained by a forward pass at the optimal solution $x^*$. If the relative gap $\epsilon$ is within a prescribed tolerance $\tau$, i.e., 
$$\epsilon= \frac{|z_K^* - \hat{f}(x^*)|}{|\hat{f}(x^*)|} \le \tau,$$
the corresponding solution is accepted. Otherwise, the algorithm branches on the ICNN input variables, forming descendant subproblems based on the envelope relaxation~\eqref{eq:envelope_relax} over the corresponding sub-boxes and progressively tightening the envelope until convergence.
 
Algorithm~\ref{alg:BB} presents the pseudocode. The algorithm maintains a list $\mathcal{L}$ of active subproblems, an incumbent objective value $\overline{h}$, and the corresponding solution $\overline{x}$; these are initialised in Line~\ref{alg:bb_init}. Subproblems are explored in breadth-first order, ensuring that all nodes at a given depth are processed before moving deeper into the search tree. 
At each iteration, a subproblem $S$ is selected from $\mathcal{L}$ and solved~(Lines~\ref{alg:bb_select_s}--\ref{alg:bb_solve}), producing the optimal objective value $h^*$, the optimal solution $x^*$, and the relaxation output $z_K^*$ whenever $S$ is feasible~(Line~\ref{alg:bb_feasi_sol}). When required, a forward pass through the ICNN yields the true surrogate output $\hat{f}(x^*)$, from which the relative gap $\epsilon$ is computed in Line~\ref{alg:bb_gap}. The node is either pruned according to Lines~\ref{alg:bb_prune_infeas}--\ref{alg:bb_prune_opt} or branched upon according to Lines~\ref{alg:bb_select_var}--\ref{alg:bb_children}, as described below.

\begin{algorithm}[ht!]
\caption{Envelope-relaxation-based branch-and-bound for ICNN-embedded optimisation}\label{alg:BB}
\begin{algorithmic}[1]
    \State {\bf Inputs.} Outer problem~\eqref{eq:orig}; trained ICNN $\hat{f}:\mathbb{R}^n\to\mathbb{R}$; input box $\mathcal{X} = \{x \in \mathbb{R}^n : l_j \le x_j \le u_j,\ j=1,\dots,n\}$; tolerance $\tau > 0$.
    \State {\bf Output.} Incumbent solution $(\overline{x}, \overline{h})$.
    \State \textbf{Convention.} The root subproblem $S_0$ over $\mathcal{X}$ uses the epigraph embedding~\eqref{eq:icnn-full}; all descendants use the envelope relaxation~\eqref{eq:envelope_relax} over their current sub-boxes. \label{alg:bb_convention}
    \State {\bf Initialise.} $\mathcal{L} \gets \{S_0\}$; \; $\overline{h} \gets +\infty$; \; $\overline{x} \gets \emptyset$. \label{alg:bb_init}
    \While {$\mathcal{L} \neq \emptyset$} \label{alg:BB_loop}
        \State Select $S$ from $\mathcal{L}$ in breadth-first order, with box $\mathcal{X}_S = \{x : l_j \le x_j \le u_j,\ j=1,\dots,n\}$.  \label{alg:bb_select_s}
        \State Solve $S$. \label{alg:bb_solve}
        \If {$S$ is infeasible} \textbf{continue} \Comment{prune by infeasibility} \label{alg:bb_prune_infeas}
        \EndIf
        \State Obtain optimal solution $(x^*, z_K^*, h^*)$. \label{alg:bb_feasi_sol}
        \If {$h^* \ge \overline{h}$} \textbf{continue} \Comment{prune by bound} \label{alg:bb_prune_bound}
        \EndIf
        \State Evaluate $\hat{f}(x^*)$ by forward pass; compute relative gap $\epsilon$. \label{alg:bb_gap}
        \If {$\epsilon \le \tau$} \Comment{prune by optimality} \label{alg:bb_prune_opt}
            \If {$h^* < \overline{h}$} $\overline{h} \gets h^*$; \; $\overline{x} \gets x^*$
            \EndIf
            \State \textbf{continue}
        \EndIf
        \State \textbf{Branch.} Pick $x_j$ with widest interval $u_j - l_j$; break ties by $x_j^*$ closest to $(l_j + u_j)/2$. \label{alg:bb_select_var}
        \State $m_j \gets (l_j + u_j)/2$; \; $\mathcal{X}' \gets \mathcal{X}_S \cap \{x_j \le m_j\}$, \; $\mathcal{X}'' \gets \mathcal{X}_S \cap \{x_j \ge m_j\}$. \label{alg:bb_bisect}
        \State Evaluate $\hat{f}$ at any vertices of $\mathcal{X}', \mathcal{X}''$ not previously evaluated. \label{alg:bb_vertex_eval}
        \State Form child subproblems $S', S''$ over $\mathcal{X}', \mathcal{X}''$; \; $\mathcal{L} \gets \mathcal{L} \cup \{S', S''\}$. \label{alg:bb_children}
    \EndWhile
    \State \Return $(\overline{x}, \overline{h})$.
\end{algorithmic}
\end{algorithm}

Three pruning rules are applied at each node. 
A node is pruned by \emph{infeasibility}~(Line~\ref{alg:bb_prune_infeas}) if its subproblem has no feasible solution. 
It is pruned by \emph{bound}~(Line~\ref{alg:bb_prune_bound}) if $h^* \ge \overline{h}$, since the lower bound on this box already matches or exceeds the incumbent, so no point inside can improve upon it. 
It is pruned by \emph{optimality}~(Line~\ref{alg:bb_prune_opt}) if $\epsilon \leq \tau$, indicating that the relaxation output is sufficiently close to the true ICNN output at the node's optimum; in this case, the incumbent is updated to $(x^*, h^*)$ whenever the objective improves upon $\overline{h}$.

If no pruning criterion applies, the algorithm branches on the ICNN input variable with the widest current interval $u_j - l_j$, as specified in Line~\ref{alg:bb_select_var}. 
Wider intervals tend to allow larger pointwise overestimation of the ICNN output by the concave envelope. Branching along the widest coordinate, therefore, targets the dimension most likely to contribute to the remaining overestimation at the current relaxation solution. 
When several variables are tied in width, which commonly occurs when multiple surrogates share the same input domain, the tie is broken by selecting the variable whose optimal value $x_j^*$ is closest to the midpoint of its interval. Branching is expected to be most effective at such variables because the overestimation is typically concentrated in the interior of the domain.

The chosen variable is then bisected at the midpoint $m_j = (l_j + u_j)/2$, creating two child subproblems with tightened bounds, as shown in Line~\ref{alg:bb_bisect}. 
Midpoint bisection is preferred as a simple and balanced choice that requires no additional computation to identify a split point while still reducing the overestimation induced by the envelope. 
The vertex set of each child (Definition~\ref{def:vertices}) is obtained by inheriting vertices from the parent box and adding the new vertices introduced on the bisecting face. 
In principle, only the newly introduced vertices need to be evaluated to update the concave-envelope coefficients $\hat{f}(\tilde{x}^i)$, as indicated in Line~\ref{alg:bb_vertex_eval}. 
In our implementation, however, we recompute all vertices of each child box, since the cost of forward passes is modest relative to the overhead of maintaining and querying a vertex cache for the input dimensions considered. 
The resulting child subproblems are then added to the active list $\mathcal L$~(Line~\ref{alg:bb_children}).

When the outer problem embeds several ICNN surrogates $\hat{f}^{(s)}: \mathbb{R}^{n_s} \to \mathbb{R}$, $s=1,\dots,N_s$, with disjoint inputs, the algorithm extends naturally by stacking. The per-surrogate inputs are concatenated into a combined variable $x = (x^{(1)}, \dots, x^{(N_s)})$ living in the product box $\mathcal{X}^{(1)} \times \cdots \times \mathcal{X}^{(N_s)}$, and the envelope relaxation~\eqref{eq:envelope_relax} is augmented with one concave-envelope block per surrogate. Crucially, vertices are enumerated \emph{per surrogate}, so the vertex cost grows as $\sum_s 2^{n_s}$ rather than the intractable $2^{\sum_s n_s}$. Variable selection and bisection operate on the stacked coordinates, applying the widest-interval rule in Line~\ref{alg:bb_select_var} across all $\sum_s n_s$ dimensions, and the node's relative gap is taken as the worst-case across surrogates, $\epsilon = \max_s \epsilon^{(s)}$, so optimality-pruning requires every surrogate output to be sufficiently accurate at the node's optimum.

When the epigraph embedding is valid in the sense of Definition~\ref{def:validity}, the relaxation output $z_K^*$ coincides with $\hat{f}(x^*)$ at the root node, so the algorithm terminates immediately without branching. When the embedding is not valid, the algorithm branches on the ICNN input variables, progressively tightening the concave envelope by halving the box domain. Since the ICNN is continuous and piecewise linear, the concave envelope converges uniformly to $\hat{f}$ as the subproblem domains shrink, ensuring that the relative gap $\epsilon$ eventually falls below any $\tau > 0$. Combined with the pruning rules, this guarantees finite termination.

We also note that the algorithm is highly amenable to parallelisation since subproblems at the same depth of the search tree are independent and can be solved concurrently. In practice, this maps naturally onto multi-threaded or distributed execution.

Compared with BB on the MIP-based formulation~\eqref{eq:icnn_mip}, where branching acts on the intermediate binary variables~$\delta_{k,j}$ encoding the network internally, ICNN-BB branches on the input variables directly. The branching decisions, therefore, range over the $n$ input dimensions rather than over the activation variables, whose number grows with network size.

\section{Computational experiments}
\label{sec:exp}

This section evaluates ICNNs as surrogates in optimisation problems through three real-world case studies drawn from the literature: humanitarian food aid~\citep{maragno_mixed-integer_2023}, oil well routing~\citep{grimstad_relu_2019}, and wine blending~\citep{turner_surrogatelib_2024}. In each instance, we compare three methods: the standard MIP formulation of FNNs (FNN-MIP) based on~\eqref{eq:nn_mip}, the equivalent MIP formulation applied to ICNNs (ICNN-MIP) using~\eqref{eq:icnn_mip}, and our specialised BB algorithm (ICNN-BB).

\subsection{Setup}

Each case study is organised around three questions. The first relates to \emph{approximation quality}: can an ICNN match the predictive accuracy of a standard FNN with the same depth and layer widths, so that the imposed convexity restriction does not introduce a substantial practical cost? The second relates to \emph{MIP formulation tightness}: does ICNN-MIP lead to stronger LP relaxations than the FNN-MIP baseline, and does this translate into faster solution times, as suggested by the analysis in Section~\ref{sec:icnn_mip}? Lastly, we consider \emph{algorithmic contribution}: does the proposed ICNN-BB outperform ICNN-MIP by eliminating binary variables altogether, as developed in Section~\ref{sec:icnn_bb}? To enable a consistent comparison, all three methods are evaluated on identical instances. For each case study, the ICNN/FNN pair shares the same number of layers and neurons per layer, and ICNN-MIP and ICNN-BB use the same trained ICNN.

All experiments were run on a computing platform equipped with an AMD Ryzen 7 6800H processor and 16 GB RAM. The optimisation models were implemented in Julia v1.10.3~\citep{bezanson_julia_2017} using the JuMP v1.26.0 modelling interface~\citep{lubin_jump_2023}. The surrogates were trained using Flux v0.14.25~\citep{innes_flux_2018}. Two solvers were employed: Gurobi v12.0.2~\citep{gurobi_optimization_llc_gurobi_2024} and HiGHS v1.17.0~\citep{huangfu_parallelizing_2018}, with 8 threads, a 3600~s time limit, and otherwise default solver settings. Gurobi was selected as a representative commercial solver, while HiGHS serves as a state-of-the-art open-source alternative. 
For ICNN-BB, we use an optimality tolerance $\tau = 0.01$ across all case studies. The time limit is checked between node solves, so a solve already in progress when the limit is reached is allowed to finish, and the reported wall-clock time may marginally exceed 3600 s. 
Full reproducibility materials are provided at~\url{https://github.com/Lycle/icnn-surrogate-opt}.

\subsection{Case 1: Food aid}

Humanitarian food aid aims to deliver nutritionally complete and palatable food to those in need as efficiently as possible. We adopt the model proposed by~\citet{maragno_mixed-integer_2023}, which builds upon the original formulation by~\citet{peters_nutritious_2021} to support decision-making for the World Food Programme. 

\subsubsection{Problem description}

The food aid optimisation model aims to minimise transportation and procurement costs for a food basket while adhering to certain standards for nutritional value and palatability. The food basket consists of varying amounts of 25 commodities such as wheat flour, milk, salt and sugar. Palatability is inherently difficult to model explicitly and is therefore approximated by an NN surrogate.

The model, presented in~\eqref{eq:food_full}, considers three node types: sources $\mathcal{N}_\mathcal{S}$ where commodities are procured, transshipment locations $\mathcal{N}_\mathcal{T}$, and destinations $\mathcal{N}_\mathcal{D}$ where food aid is distributed. Let $\mathcal{K}$ denote the set of commodities and $\mathcal{L}$ the set of nutrients.
The objective~\eqref{food:obj} minimises the total procurement and transportation costs, with $p_{ik}^{\text{P}}$ denoting the procurement cost (\$/ton) of commodity $k$ at source $i \in \mathcal{N}_\mathcal{S}$, $p_{ijk}^{\text{T}}$ the transportation cost (\$/ton) for shipping commodity $k$ from node $i$ to node $j$, and $F_{ijk}$ the flow of commodity $k$ between nodes $i$ and $j$.
Constraint~\eqref{food:con1} ensures flow balance of each commodity across transhipment nodes, correcting the index sets in equation~(6b) of~\citet{maragno_mixed-integer_2023}. Constraint~\eqref{food:con2} enforces demand satisfaction at destination nodes. Here, $D_i$ is the number of beneficiaries at destination $i$, $x_k$ is the grams of commodity $k$ in the food basket, $T_{\text{days}}$ is the number of feeding days, and $\eta$ converts metric tons to grams.
Constraints~\eqref{food:con3}--\eqref{food:con5} enforce nutritional requirements: $\nu_{kl}$ is the amount of nutrient $l \in \mathcal{L}$ in commodity $k \in \mathcal{K}$, $\bar{\nu}_l$ is the minimum required daily intake of nutrient $l$, and salt and sugar are fixed at recommended dietary levels.
The palatability surrogate $\hat{h}(x)$ appears in~\eqref{food:con6}, with a lower bound $t = 0.5$ imposed in~\eqref{food:con7}.
\begin{mini!}
    {x,y,F}
    {\sum_{i \in \mathcal{N}_\mathcal{S}} \sum_{j \in \mathcal{N}_\mathcal{T} \cup \mathcal{N}_\mathcal{D}} \sum_{k \in \mathcal{K}} 
    p_{ik}^{\text{P}} F_{ijk} 
    + \sum_{i \in \mathcal{N}_\mathcal{S} \cup \mathcal{N}_\mathcal{T}} 
    \sum_{j \in \mathcal{N}_\mathcal{T} \cup \mathcal{N}_\mathcal{D}} 
    \sum_{k \in \mathcal{K}} p_{ijk}^{\text{T}} F_{ijk}
    \label{food:obj}}
    {\label{eq:food_full}}{}
    \addConstraint{\sum_{j \in \mathcal{N}_\mathcal{D} \cup \mathcal{N}_\mathcal{T}} F_{ijk}}
    {= \sum_{j \in \mathcal{N}_\mathcal{S} \cup \mathcal{N}_\mathcal{T}} F_{jik}, \quad}
    {\forall i \in \mathcal{N}_\mathcal{T}, \, \forall k \in \mathcal{K} \label{food:con1}}
    \addConstraint{\sum_{j \in \mathcal{N}_\mathcal{S} \cup \mathcal{N}_\mathcal{T}} \eta F_{jik}}
    {= D_i x_k T_{\text{days}}, \quad}
    {\forall i \in \mathcal{N}_\mathcal{D}, \, \forall k \in \mathcal{K} \label{food:con2}}
    \addConstraint{\sum_{k \in \mathcal{K}} \nu_{kl} x_k}
    {\geq \bar{\nu}_l, \quad}{\forall l \in \mathcal{L} \label{food:con3}}
    \addConstraint{x_{\text{salt}}}{ = 5, \quad}{\label{food:con4}}
    \addConstraint{x_{\text{sugar}}}{ = 20, \quad}{\label{food:con5}}  
    \addConstraint{y }{= \hat{h}(x), \quad}{\label{food:con6}}
    \addConstraint{y }{\geq t, \quad}{\label{food:con7}}
    \addConstraint{x_k, F_{ijk}}{ \geq 0, \quad}
    {\forall i,j \in \mathcal{N}, \, \forall k \in \mathcal{K} \label{food:con8}}
\end{mini!}

\subsubsection{Training performance}

The palatability dataset~\citep{maragno_OptiCL_2021} contains 5000 samples with 25 features (commodity amounts) and one label (palatability), split 80/20 for training and validation. 
Our preliminary assessment using alternative ICNNs indicated an approximately concave response, so all ICNNs are trained on the negated target. For each of six architectures, with two or three hidden layers of 10, 50, or 100 neurons, we train a corresponding ICNN/FNN pair. Training uses the \textit{Adam} optimiser~\citep{kingma_adam_2017} with batch size 32 for 200 epochs, with learning rate 0.001 for FNNs and 0.0015 for ICNNs.
 
Table~\ref{table:food_training} reports training time and validation mean squared error (MSE) for each ICNN/FNN pair. The ICNNs match the FNNs on this task, and for the 3\_10 and 3\_50 networks, the ICNN is in fact more accurate than its FNN counterpart. ICNN training is roughly 20\% slower in wall-clock time because of the weight-projection step, but this cost is incurred once and is negligible relative to downstream solve times. Imposing convexity on the surrogate is therefore essentially free for this problem.

\begin{table}[!ht]
    \centering
    \footnotesize
    \caption{Case 1 (food aid): training time and validation MSE of ICNN/FNN pairs. The network label $L\_N$: $L$ hidden layers of $N$ neurons each.}\label{table:food_training}
    \vspace{4pt}
    \begin{tabular}{llrr}
    \toprule
    Network & Type & Training time (s) & MSE ($\times$10\textsuperscript{-4})\\
    \midrule
    \multirow{2}{*}{2\_10}
        & ICNN     & 3.505& 9.208\\
        & FNN       & 2.901& 7.522\\
    \midrule
    \multirow{2}{*}{3\_10}
        & ICNN     & 4.375& 8.031\\
        & FNN       & 3.462& 12.348\\
    \midrule
    \multirow{2}{*}{2\_50}
        & ICNN     & 6.209& 9.652\\
        & FNN       & 4.159& 7.892\\
    \midrule
    \multirow{2}{*}{3\_50}
        & ICNN     & 7.732& 9.652\\
        & FNN       & 5.463& 10.907\\
    \midrule
    \multirow{2}{*}{2\_100}
        & ICNN     & 13.971& 8.882\\
        & FNN       & 11.885& 6.237\\
    \midrule
    \multirow{2}{*}{3\_100}
        & ICNN     & 24.306& 7.815\\
        & FNN       & 19.799& 6.549\\
    \bottomrule
    \end{tabular}
\end{table}

\subsubsection{Optimisation results}

Tables~\ref{table:food_opt_highs} and~\ref{table:food_opt_gurobi} report the optimisation performance of the three methods using HiGHS and Gurobi, respectively. FNN-MIP scales poorly with surrogate size. Even the moderately sized 2\_50 network already pushes HiGHS past the 3600~s time limit after exploring roughly 760{,}000 nodes, and for every architecture with 50 or more neurons per layer, FNN-MIP reaches the time limit using either solver. Gurobi performs better than HiGHS at the 2\_50 scale, but then fails on all larger networks, exploring up to 8.5 million nodes without closing the gap on the 3\_50 instance. ICNN-MIP scales dramatically better. Using Gurobi, every ICNN-MIP instance solves in under 0.15~s at the root node, regardless of network size. Similarly, using HiGHS, only the 2\_100 network produces nontrivial search effort, and inspection of the solution log confirms that HiGHS's root LP relaxation also matches the MIP optimum. The additional search merely constructs a matching feasible integer solution that Gurobi's root heuristic finds immediately. This is consistent with the theoretical analysis in Section~\ref{sec:icnn_mip}, where the tighter LP relaxation induced by the ICNN architecture enables the MIP solver to largely avoid branching in these instances.

\begin{table}[!ht]
    \centering
    \footnotesize
    \caption{Case 1 (food aid): solve performance of ICNN-BB, ICNN-MIP, and FNN-MIP using HiGHS. Reported are the number of nodes explored by the underlying search, total solving time, average time per node, and the objective value at termination. A dash indicates no feasible solution found within the 3600~s time limit; an asterisk marks a feasible incumbent not proven optimal.}\label{table:food_opt_highs}
    \vspace{4pt}
    \begin{tabular}{llrrrr}
    \toprule
    Network & Method & Nodes & Solving time (s) & Time/node (s) & Objective \\
    \midrule
    \multirow{3}{*}{2\_10}
        & ICNN-BB       & 1   & 0.05& 0.050& 32284.7\\
        & ICNN-MIP     & 1& 0.10& 0.100& 32284.7\\
        & FNN-MIP       & 53& 0.68& 0.013& 32695.2\\
    \midrule
    \multirow{3}{*}{3\_10}
        & ICNN-BB      & 1 & 0.05& 0.050& 32617.8\\
        & ICNN-MIP     & 1& 0.19& 0.190& 32617.8\\
        & FNN-MIP       & 668& 2.19& 0.003& 33086.7\\
    \midrule
    \multirow{3}{*}{2\_50}
        & ICNN-BB       & 1 & 0.06& 0.060& 32043.5\\
        & ICNN-MIP     & 1& 0.56& 0.560&  32043.5\\
        & FNN-MIP       & 759141& 3600.09& 0.005& *32490.9\\
    \midrule
    \multirow{3}{*}{3\_50}
        & ICNN-BB           & 1 & 0.06& 0.060& 32138.2\\
        & ICNN-MIP     & 1& 2.63& 2.630&  32138.2\\
        & FNN-MIP       & 273084& 3600.11& 0.013& -\\
    \midrule
    \multirow{3}{*}{2\_100}
        & ICNN-BB           & 1 & 0.06& 0.060& 32176.6\\
        & ICNN-MIP     & 1375& 8.76& 0.006&  32176.6\\
        & FNN-MIP       & 154116& 3600.07& 0.023& -\\
    \midrule
    \multirow{3}{*}{3\_100}
        & ICNN-BB           & 1 & 0.07& 0.070& 32673.5\\
        & ICNN-MIP     & 1& 2.40& 2.400&  32673.5\\
        & FNN-MIP       & 52300& 3600.04& 0.069& -\\
    \bottomrule
    \end{tabular}
\end{table}

\begin{table}[!ht]
    \centering
    \footnotesize
    \caption{Case 1 (food aid): solve performance of ICNN-BB, ICNN-MIP, and FNN-MIP using Gurobi. Columns and notation as in Table~\ref{table:food_opt_highs}.}\label{table:food_opt_gurobi}
    \vspace{4pt}
    \begin{tabular}{llrrrr}
    \toprule
    Network & Method & Nodes & Solving time (s) & Time/node (s) & Objective \\
    \midrule
    \multirow{3}{*}{2\_10}
        & ICNN-BB       & 1& 0.04& 0.0400& 32284.7\\
        & ICNN-MIP     & 1& 0.02& 0.0200& 32284.7\\
        & FNN-MIP       & 41& 0.28& 0.0067& 32695.2\\
    \midrule
    \multirow{3}{*}{3\_10}
        & ICNN-BB      & 1& 0.04& 0.0400& 32617.8\\
        & ICNN-MIP     & 1& 0.02& 0.0200& 32617.8\\
        & FNN-MIP       & 664& 0.15& 0.0002& 33086.7\\
    \midrule
    \multirow{3}{*}{2\_50}
        & ICNN-BB       & 1& 0.04& 0.0400& 32043.5\\
        & ICNN-MIP     & 1& 0.03& 0.0300&  32043.5\\
        & FNN-MIP       & 246362& 54.30& 0.0002& 32210.4\\
    \midrule
    \multirow{3}{*}{3\_50}
        & ICNN-BB           & 1& 0.04& 0.0400& 32138.2\\
        & ICNN-MIP     & 1& 0.04& 0.0400&  32138.2\\
        & FNN-MIP       & 8567591& 3600.05& 0.0004& *32558.3\\
    \midrule
    \multirow{3}{*}{2\_100}
        & ICNN-BB           & 1& 0.04& 0.0400& 32176.6\\
        & ICNN-MIP     & 1& 0.05& 0.0500&  32176.6\\
        & FNN-MIP       & 4962311& 3600.14& 0.0007& *33180.1\\
    \midrule
    \multirow{3}{*}{3\_100}
        & ICNN-BB           & 1& 0.05& 0.0500& 32673.5\\
        & ICNN-MIP     & 1& 0.12& 0.1200&  32673.5\\
        & FNN-MIP       & 3486818& 3600.08& 0.0010& -\\
    \bottomrule
    \end{tabular}
\end{table}

ICNN-BB terminates at the root node on all architectures using either solver, within 0.04--0.07 s, confirming that the epigraph embedding is exact on this problem. 
Using HiGHS, ICNN-BB is faster than ICNN-MIP even when both terminate at the root node. This difference is consistent with ICNN-BB solving a strictly smaller LP formulation derived directly from the epigraph embedding. Specifically, under the same network size, the epigraph embedding in the ICNN-BB root node only introduces a number of continuous variables equal to the hidden-layer neuron counts. In contrast, ICNN-MIP solves an LP relaxation that retains the relaxed binary indicator and associated slack variables for every neuron in the hidden layers, resulting in a larger root node relaxation problem. Using Gurobi, the same computational distinction applies, but it largely offsets the impact of differences in formulation sizes, making both methods near-instantaneous across all architectures and leaving little room for ICNN-BB to demonstrate an advantage. A more demanding stress test of the algorithm follows in Cases~2 and~3.

Across all methods and NN architectures, the objective values agree to within the variation introduced by differences among the trained surrogates. Moreover, as shown in Table~\ref{table:food_training}, the two widest networks (2\_100, 3\_100) achieve the best validation accuracy among the six architectures, yet these are precisely the ones on which FNN-MIP times out. This highlights an important practical advantage of ICNN-BB. 
When the epigraph embedding is valid, as it is throughout this case study, its computational cost is largely independent of network size, so the user is not forced to compromise on surrogate capacity for tractability.

\subsection{Case 2: Oil well routing}

The second case study, drawn from~\citet{grimstad_relu_2019}, considers oil flow from a set of wells through intermediate manifolds to processing facilities, with the objective of maximising total oil collected at the separators. NN surrogates are used for two classes of nonlinear physical relationships: the well performance curves and the riser pressure drop functions. Relative to Case~1, the problem combines these surrogates with a nontrivial combinatorial structure from binary well-to-manifold routing decisions, making it a more demanding test.

\subsubsection{Problem description}

The problem is defined on a directed acyclic graph (Figure~\ref{fig:oil_graph}). 
Nodes partition into oil wells $N^w$, manifolds $N^m$, and separators $N^s$. Edges are the pipelines $E^d$ from wells to manifolds and the risers $E^r$ from manifolds to separators. The flow on each edge is decomposed into three phases $C=\{\text{oil}, \text{gas}, \text{water}\}$. Each well connects to both manifolds, and each manifold connects to a single separator.

\begin{figure}[ht]
    \centering
    \includegraphics[width=0.70\textwidth]{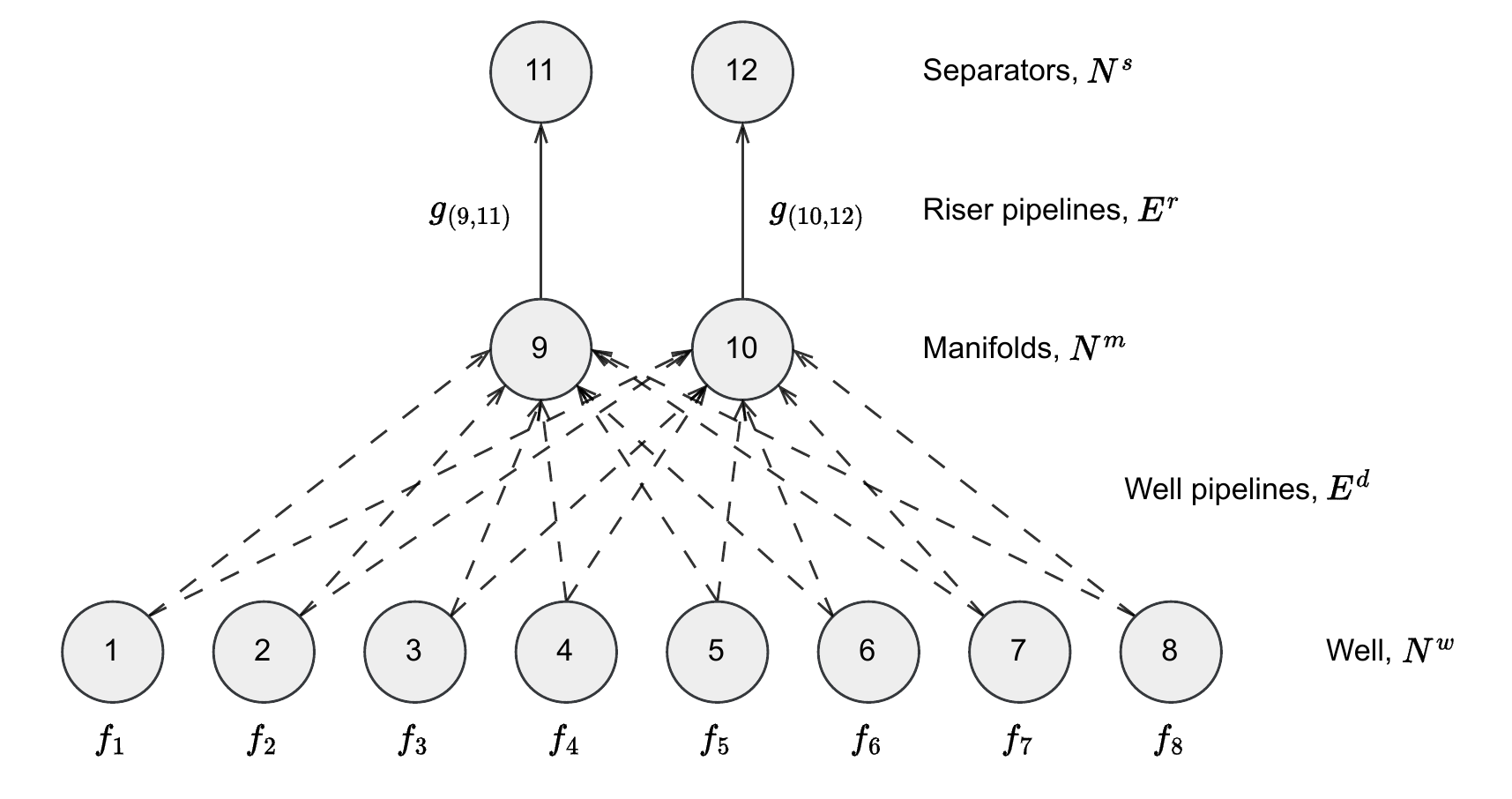}
    \caption{Network structure of the oil production problem with two manifolds and eight wells, adapted from~\citet{grimstad_relu_2019}.}
    \label{fig:oil_graph}
\end{figure}

The oil optimisation problem is formulated as~\eqref{oil_milp}. The objective~\eqref{oil:obj} maximises total oil flow at the separators, where $q_{e, \text{oil}}$ denotes the oil flow on edge $e$. Constraint~\eqref{oil:con1} enforces flow conservation of each phase at the manifolds. Constraint~\eqref{oil:con2} models the riser pressure drop surrogate $\hat g_e$, which predicts the upstream separator pressure $p_j$ from the three-phase flows and the downstream manifold pressure $p_i$, where $e=(i, j)$. The binary variable $y_e$ indicates whether pipe $e \in E^d$ is open. Constraint~\eqref{oil:con3} links the pipe-pressure difference to the routing decision via known pressure limits $p^U$ and $p^L$. Constraint~\eqref{oil:con4} restricts each well to at most one active outgoing pipe. Constraint~\eqref{oil:con5} bounds each phase flow by well-specific limits $q_{e, c}^L$, $q_{e, c}^U$, forcing the flow to zero when $y_e = 0$. Constraint~\eqref{oil:con6} imposes node pressure bounds $p^L$ and $p^U$. 
The well pressure–flow relationships are captured by the surrogates $\hat f_i$ in~\eqref{oil:con7}. Gas and water flows are proportional to oil flow via the known gas-oil and water-oil ratios $c_{e, \text{gor}}$ and $c_{e, \text{wor}}$, enforced by~\eqref{oil:con8}--\eqref{oil:con9}. Separator pressures are fixed at~\eqref{oil:con10}. 
\begin{maxi!}
    {y, q, p}
    {\sum_{e \in E^r} q_{e, \text{oil}} \label{oil:obj}}
    {\label{oil_milp}}
    {}
    \addConstraint{\sum_{e \in E_i^\text{in}} q_{e,c}}{= \sum_{e \in E_i^\text{out}} q_{e,c}, \quad}{\forall c \in C, i \in N^m \label{oil:con1}}
    \addConstraint{p_j}{= \hat g_e(q_{e, \text{oil}}, q_{e, \text{gas}}, q_{e, \text{water}}, p_i), \quad}{\forall e \in E^r \label{oil:con2}}
    \addConstraint{(-p_j^U+p_i^L)(1-y_e)}{\leq p_i-p_j \leq (p_i^U-p_j^L)(1-y_e), \quad}{\forall e \in E^d \label{oil:con3}}
    \addConstraint{\sum_{e \in E_i^\text{out}} y_e}{\leq 1, \quad}{\forall i \in N^w \label{oil:con4}}
    \addConstraint{y_e q_{e, c}^L}{\leq q_{e, c} \leq y_e q_{e, c}^U, \quad}{\forall c \in C, e \in E^d \label{oil:con5}}
    \addConstraint{p_i^L}{\leq p_i \leq p_i^U, \quad}{\forall i \in N \label{oil:con6}}
    \addConstraint{\sum_{e \in E_i^\text{out}} q_{e, \text{oil}}}{= \hat f_i(p_i), \quad}{\forall i \in N^w \label{oil:con7}}
    \addConstraint{\sum_{e \in E_i^\text{out}} q_{e, \text{gas}}}{= c_{e, \text{gor}} \sum_{e \in E_i^\text{out}} q_{e, \text{oil}}, \quad}{\forall i \in N^w \label{oil:con8}}
    \addConstraint{\sum_{e \in E_i^\text{out}} q_{e, \text{water}}}{= c_{e, \text{wor}} \sum_{e \in E_i^\text{out}} q_{e, \text{oil}}, \quad}{\forall i \in N^w \label{oil:con9}}
    \addConstraint{p_i}{= p_i^s, \quad}{\forall i \in N^s \label{oil:con10}}
    \addConstraint{y_e}{\in \{0, 1\}, \quad}{\forall e \in E^d \label{oil:con11}}
\end{maxi!}

The original problem of~\citet{grimstad_relu_2019} has two manifolds and eight wells. To study scaling behaviour, we consider a family of instances $2\_m$ with two manifolds and $m \in \{6, 7, \dots, 12\}$ wells. Each added well contributes one pressure–flow surrogate and two binary routing variables. For $m > 8$ we cycle through the eight base surrogates, so every well remains associated with a pressure–flow curve. Although these enlarged instances do not correspond to configurations from the original study, they preserve the problem structure and provide a controlled test of computational scaling.

\subsubsection{Training performance}

The original problem involves nine surrogates: eight well surrogates and one shared riser surrogate used for both risers, since only a single riser dataset is available in the repository of~\citet{grimstad_relu_2022}. Each well dataset contains 20 to 26 simulator-generated, and therefore noiseless, samples, consisting of a single feature (well pressure) and a single label (oil flow). Given the limited sample size, all points are used for training without a validation split. The riser dataset contains approximately 4000 samples with four features (the three phase flows and manifold pressure) and a single label (separator pressure), split 80/20 for training and validation. Following~\citet{grimstad_relu_2019}, each well surrogate has two hidden layers of 20 neurons, and the riser surrogate has two hidden layers of 50 neurons.

For each of the nine surrogates, we train a corresponding ICNN/FNN pair. Preliminary testing indicated concave relationships in the labelled data for the well response, for which ICNNs were trained on the negated targets, and a convex relationship for riser response, for which ICNNs were trained on the original target. Training used \textit{Adam} optimiser with learning rate 0.001 for both NN classes, with 1000 epochs and batch size 1 for the wells, and 500 epochs and batch size 32 for the riser.

Table~\ref{table:oil_train} reports training time and MSE of each pair, using training MSE for the well surrogates and validation MSE for the riser surrogate. The eight well surrogates show essentially identical accuracy between ICNN and FNN, with MSE on the order of $10^{-5}$; on wells 1, 6, and 8, the ICNN is in fact more accurate. The riser is the one surrogate where the convexity restriction is visibly costly, with the ICNN MSE roughly an order of magnitude larger than that of the FNN. 
Despite this gap, the MSE remains small relative to the output scale, so the ICNN remains acceptable as a physical surrogate for the riser. 

\begin{table}[ht]
    \centering
    \footnotesize
    \caption{Case 2 (oil well routing): training time and reported MSE of ICNN/FNN pairs for the eight well surrogates and the shared riser surrogate.}\label{table:oil_train}
    \vspace{4pt}
    \begin{tabular}{llrr}
    \toprule
    Target & Type & Training time (s) & MSE ($\times$10\textsuperscript{-5})\\
    \midrule
    \multirow{2}{*}{well 1}
        & ICNN     & 2.696& 1.528\\
        & FNN       & 2.336& 2.624\\
    \midrule
    \multirow{2}{*}{well 2}
        & ICNN     & 2.747& 0.240\\
        & FNN       & 2.346& 0.220\\
    \midrule
    \multirow{2}{*}{well 3}
        & ICNN     & 3.506& 2.216\\
        & FNN       & 2.586& 0.600\\
    \midrule
    \multirow{2}{*}{well 4}
        & ICNN     & 3.268& 0.635\\
        & FNN       & 2.813& 0.367\\
    \midrule
    \multirow{2}{*}{well 5}
        & ICNN     & 2.875& 0.110\\
        & FNN       & 2.462& 0.109\\
    \midrule
    \multirow{2}{*}{well 6}
        & ICNN     & 2.785& 0.473\\
        & FNN       & 2.368& 0.708\\
    \midrule
    \multirow{2}{*}{well 7}
        & ICNN     & 3.522& 1.493\\
        & FNN       & 2.940& 0.951\\
    \midrule
    \multirow{2}{*}{well 8}
        & ICNN     & 2.741& 0.690\\
        & FNN       & 2.353& 1.278\\
    \midrule
    \multirow{2}{*}{riser}
        & ICNN     & 3.482& 165.857\\
        & FNN       & 2.992& 18.338\\
    \bottomrule
    \end{tabular}
\end{table}

\subsubsection{Optimisation results}

Tables~\ref{table:oil_opt_highs} and~\ref{table:oil_opt_gurobi} report the performance of the three methods across the $2\_m$ family. As $m$ increases, the number of binary routing decisions, flow variables, and surrogate evaluations also increases, yielding progressively larger mixed-integer models.

\begin{table}[ht]
    \centering
    \footnotesize
    \caption{Case 2 (oil well routing): solve performance of ICNN-BB, ICNN-MIP, and FNN-MIP using HiGHS on instances indexed by $2\_m$, where $m$ denotes the number of downstream routing components. Columns and notation as in Table~\ref{table:food_opt_highs}.}\label{table:oil_opt_highs}
    \vspace{4pt}
    \begin{tabular}{llrrrr}
    \toprule
    Instance & Method & Nodes & Solving time (s) & Time/node (s) & Objective \\
    \midrule
    \multirow{3}{*}{2\_6}& ICNN-BB           & 5315& 57.07& 0.0107& 0.8213\\
        & ICNN-MIP     & 16954& 168.92& 0.0100& 0.8211\\
        & FNN-MIP       & 628591& 3600.06& 0.0057& -\\
    \midrule
    \multirow{3}{*}{2\_7}& ICNN-BB           & 9445& 132.28& 0.0140& 1.0136\\
        & ICNN-MIP     & 39620& 391.62& 0.0099& 1.0136\\
        & FNN-MIP       & 705898& 3600.09& 0.0051& -\\
    \midrule
    \multirow{3}{*}{2\_8}& ICNN-BB           & 19523& 339.23& 0.0174& 1.2168\\
        & ICNN-MIP     & 134940  & 1337.69  & 0.0099& 1.2150 \\
        & FNN-MIP       & 471438 & 3600.04  & 0.0076& - \\
    \midrule
    \multirow{3}{*}{2\_9}& ICNN-BB           & 36689& 758.53& 0.0207& 1.3937\\
        & ICNN-MIP     & 280579& 2716.45& 0.0097& 1.3917\\
        & FNN-MIP       & 330001& 3600.15& 0.0109& -\\
    \midrule
    \multirow{3}{*}{2\_10}& ICNN-BB           & 70859& 1825.59& 0.0258& 1.5627\\
        & ICNN-MIP     & 290818& 3602.68& 0.0124& -\\
        & FNN-MIP       & 307765& 3600.16& 0.0117& -\\
    \midrule
    \multirow{3}{*}{2\_11}& ICNN-BB           & 83911& 3600.88& 0.0429& *1.6084\\
        & ICNN-MIP     & 307805& 3600.16& 0.0117&  *1.5578\\
        & FNN-MIP       & 402570& 3600.15& 0.0089& -\\
    \midrule
    \multirow{3}{*}{2\_12}& ICNN-BB           & 70413& 3701.50& 0.0526& -\\
        & ICNN-MIP     & 278875& 3600.12& 0.0129&  *1.6893\\
        & FNN-MIP       & 377122& 3600.15& 0.0095& -\\
    \bottomrule
    \end{tabular}
\end{table}

\begin{table}[ht]
    \centering
    \footnotesize
    \caption{Case 2 (oil well routing): solve performance of ICNN-BB, ICNN-MIP, and FNN-MIP using Gurobi across the $2\_m$ family. Columns and notation as in Table~\ref{table:food_opt_highs}.}\label{table:oil_opt_gurobi}
    \vspace{4pt}
    \begin{tabular}{llrrrr}
    \toprule
    Instance & Method & Nodes & Solving time (s) & Time/node (s) & Objective \\
    \midrule
        \multirow{3}{*}{2\_6}& ICNN-BB           & 5315& 46.91& 0.0088& 0.8213\\
        & ICNN-MIP     & 5671& 4.97& 0.0009& 0.8212\\
        & FNN-MIP       & 136940& 36.30& 0.0003& 0.8782\\
    \midrule
        \multirow{3}{*}{2\_7}& ICNN-BB           & 9443& 102.31& 0.0108& 1.0136\\
        & ICNN-MIP     & 20395& 8.72& 0.0004& 1.0136\\
        & FNN-MIP       & 326871& 85.03& 0.0003& 1.0762\\
    \midrule
    \multirow{3}{*}{2\_8}& ICNN-BB           & 18143& 241.19& 0.0133 & 1.2168\\
        & ICNN-MIP     & 38779& 33.69& 0.0009& 1.2150 \\
        & FNN-MIP       & 947628& 351.81& 0.0004& 1.2878\\
    \midrule
    \multirow{3}{*}{2\_9}& ICNN-BB           & 38465& 578.17& 0.0150& 1.3937\\
        & ICNN-MIP     & 107448& 71.02& 0.0007& 1.3917\\
        & FNN-MIP       & 2024846& 763.96& 0.0004& 1.4620\\
    \midrule
    \multirow{3}{*}{2\_10}& ICNN-BB           & 70909& 1231.57& 0.0174& 1.5627\\
        & ICNN-MIP     & 172959& 87.56& 0.0005& 1.5624\\
        & FNN-MIP       & 4275671& 1263.28& 0.0003& 1.6147\\
    \midrule
    \multirow{3}{*}{2\_11}& ICNN-BB           & 110809& 2244.76& 0.0203& 1.6693\\
        & ICNN-MIP     & 520406& 200.39& 0.0004&  1.6688\\
        & FNN-MIP       & 5374639& 1660.20& 0.0003& 1.6954\\
    \midrule
    \multirow{3}{*}{2\_12}& ICNN-BB           & 174345& 3723.66& 0.0214& -\\
        & ICNN-MIP     & 642120& 287.76& 0.0004&  1.7297\\
        & FNN-MIP       & 10215414& 3600.02& 0.0004& *1.7299\\
    \bottomrule
    \end{tabular}
\end{table}

The node-count reduction from ICNN-MIP relative to FNN-MIP is consistent across both solvers and all instance sizes. Using HiGHS, FNN-MIP explores hundreds of thousands of nodes on every instance without returning a feasible solution, whereas ICNN-MIP completes every instance up to $2\_9$ within the time limit. Using Gurobi, both methods finish on most instances, but ICNN-MIP's node count is consistently an order of magnitude smaller than FNN-MIP's (up to a factor of sixteen at $2\_{12}$). 
This confirms the theoretical result of Section~\ref{sec:icnn_mip}. The convexity structure of ICNNs tightens the LP relaxation sufficiently to produce a substantially smaller search tree, and the effect is robust across solver choice.

The comparison between ICNN-BB and ICNN-MIP is more nuanced, and the two solvers exhibit different behaviour. 
Using HiGHS, ICNN-BB is the clear winner on all instances where both methods terminate. It explores three to eight times fewer nodes than ICNN-MIP and solves roughly three to four times faster, and on $2\_{10}$, it is the only method to close the gap within the time limit. Using Gurobi, the picture reverses. Gurobi's advanced features drive the cost of an ICNN-MIP node down to roughly $5\times 10^{-4}$ s, more than an order of magnitude cheaper than an ICNN-BB node. ICNN-MIP therefore solves all instances except $2\_{12}$ to optimality using Gurobi, while ICNN-BB is slower in wall-clock time on most instances despite exploring fewer nodes. At the largest $m$, ICNN-BB's per-node cost begins to dominate, and it fails to finish within the time limit using either solver.

The algorithmic advantage of ICNN-BB over ICNN-MIP is therefore solver-dependent. It is pronounced using HiGHS, where generic MIP nodes are expensive, and ICNN-BB's smaller tree pays for itself, but attenuated using Gurobi, where generic MIP nodes are so cheap that the extra per-node cost of the concave-envelope update outweighs the tree-size reduction. In practice, this suggests that ICNN-BB offers the largest benefit in solver environments where generic MIP search is relatively expensive, or on instances whose underlying combinatorial structure remains challenging even for state-of-the-art commercial solvers.

The objective values returned by ICNN-BB and ICNN-MIP agree to within $2\times 10^{-3}$ on every instance where both methods solve to optimality, confirming that ICNN-BB matches ICNN-MIP up to solver tolerance. 
The objective gap between FNN-MIP and the ICNN methods reflects differences between the trained surrogates themselves rather than differences between the optimisation procedures.

\subsection{Case 3: Wine blending}

The third case study, adapted from~\citet{turner_surrogatelib_2024}, concerns a wine producer who purchases grapes from multiple suppliers, each offering grapes with different physicochemical characteristics and prices, and composes several blends that meet a quality threshold at minimum total cost. The quality score of each blend is predicted by an NN surrogate that takes an 11-dimensional physicochemical feature vector as input. This case study, therefore, provides a useful stress test for all three methods under increasing surrogate input dimensionality.

\subsubsection{Problem description}

The wine blending problem is formulated as~\eqref{wine_milp}. Let $n$ denote the number of blends to be created and $m$ the number of grape suppliers. Each supplier $j \in [m]$ provides grapes with feature vector $w_j \in \mathbb{R}^{11}_{+}$, available quantity $\beta_j \ge 0$, and unit cost $c_j \ge 0$. The decision variable $b_{ij} \ge 0$ represents the proportion of grapes from supplier $j$ used in blend $i \in [n]$. The resulting feature vector of blend $i$ is $x_i$, and its predicted quality is $y_i$. 
The objective~\eqref{wine:obj} minimises the total cost of grapes used in all blends. Constraint~\eqref{wine:con1} defines the feature vector of each blend as a linear combination of the features of its constituent grapes. Constraint~\eqref{wine:con2} ensures that the total amount of grapes purchased from each supplier does not exceed the available quantity. Constraint~\eqref{wine:con4} normalises each blend so that the proportions of grapes sum to one. Constraint~\eqref{wine:con5} links each blend to its predicted quality via the trained surrogate $\hat{f}$, and~\eqref{wine:con6} imposes a minimum acceptable quality threshold $q$.
\begin{mini!}
    {x,b,y}{\sum_{i=1}^n\sum_{j=1}^m c_j b_{i,j} \label{wine:obj}}{\label{wine_milp}}{}
    \addConstraint{x_i}{ = \sum_{j=1}^m b_{i,j} w_j, \quad}{\forall i\in[n] \label{wine:con1}}
    \addConstraint{\sum_{i=1}^n b_{i,j}}{\le \beta_j, \quad}{\forall j\in[m] \label{wine:con2}}
    \addConstraint{\sum_{j=1}^m b_{i,j}}{= 1, \quad}{\forall i\in[n] \label{wine:con4}}
    \addConstraint{y_i}{= \hat f(x_i), \quad}{\forall i\in[n] \label{wine:con5}}
    \addConstraint{y_i}{\ge q, \quad}{\forall i\in[n] \label{wine:con6}}
\end{mini!}

To study scaling behaviour, we consider instances $n\_m$ with $n$ blends and $m$ suppliers, where $n \in \{1, 2, 3, 4, 5\}$ and $m = 5n$. Each additional blend introduces one more 11-dimensional surrogate. 

\subsubsection{Training performance}

The wine quality surrogate is trained on the historical dataset of~\citet{cortez_modeling_2009}. 
We use an 80/20 train/validation split. Pre-screening indicated a convex response, so the ICNN is trained on the original target.
A single architecture is used for both the ICNN and the FNN: three hidden layers of 20 neurons each. Training uses the \textit{Adam} optimiser with a learning rate of 0.02 for the FNN and 0.001 for the ICNN, a batch size of 16, and 200 epochs. 

The ICNN reaches validation MSE 0.445, essentially matching the FNN's 0.434, with training wall-clock times of 2.832~s for the ICNN and 2.336~s for the FNN. As in Cases~1 and~2, imposing convexity on the surrogate does not noticeably degrade predictive accuracy on this problem.

\subsubsection{Optimisation results}

Tables~\ref{table:wine_opt_highs} and~\ref{table:wine_opt_gurobi} report performance across the $n\_m$ family. The main scalability axis is $n$, which increases the number of embedded 11-dimensional surrogates, while the proportional growth of $m$ enlarges the underlying blending model through additional supplier-allocation variables and availability constraints.

\begin{table}[!ht]
    \centering
    \footnotesize
    \caption{Case 3 (wine blending): solve performance of ICNN-BB, ICNN-MIP, and FNN-MIP using HiGHS on instances $n\_m$, where $n$ is the number of blends and $m$ the number of suppliers. Columns and notation as in Table~\ref{table:food_opt_highs}.}
    \vspace{4pt}
    \label{table:wine_opt_highs}
    \begin{tabular}{llrrrr}
    \toprule
    Instance & Method & Nodes & Solving time (s) & Time/node (s) & Objective \\
    \midrule
    \multirow{3}{*}{1\_5}
        & ICNN-BB           & 23& 0.39& 0.0171& 1.2552\\
        & ICNN-MIP     & 5& 0.23& 0.0455& 1.2601\\
        & FNN-MIP       & 26& 0.38& 0.0147& 1.2602\\
    \midrule
    \multirow{3}{*}{2\_10}
        & ICNN-BB           & 3& 0.08& 0.0280& 3.0834\\
        & ICNN-MIP     & 1& 1.82& 1.8199& 3.0834\\
        & FNN-MIP       & 279& 2.75& 0.0099& 3.0834\\
    \midrule
    \multirow{3}{*}{3\_15}
        & ICNN-BB           & 3& 0.15& 0.0487& 3.7286\\
        & ICNN-MIP     & 1& 5.11& 5.1142& 3.7286\\
        & FNN-MIP       & 1153& 7.26& 0.0063& 3.7286\\
    \midrule
    \multirow{3}{*}{4\_20}
        & ICNN-BB           & 221& 9.88& 0.0447& 4.8509\\
        & ICNN-MIP     & 1759& 18.28& 0.0104& 4.8509\\
        & FNN-MIP       & 3341& 25.07& 0.0075& 4.8509\\
    \midrule
    \multirow{3}{*}{5\_25}
        & ICNN-BB           & 72791& 3762.44& 0.0517& -\\
        & ICNN-MIP     & 702981& 3600.26& 0.0051&  *6.5086\\
        & FNN-MIP       & 1040126& 3600.14& 0.0035& *6.3888\\
    \bottomrule
    \end{tabular}
\end{table}

\begin{table}[!ht]
    \centering
    \footnotesize
    \caption{Case 3 (wine blending): solve performance of ICNN-BB, ICNN-MIP, and FNN-MIP using Gurobi across the $n\_m$ family. Columns and notation as in Table~\ref{table:food_opt_highs}.}
    \vspace{4pt}
    \label{table:wine_opt_gurobi}
    \begin{tabular}{llrrrr}
    \toprule
    Instance & Method & Nodes & Solving time (s) & Time/node (s) & Objective \\
    \midrule
    \multirow{3}{*}{1\_5}
        & ICNN-BB           & 23& 0.30& 0.0131& 1.2552\\
        & ICNN-MIP     & 1& 0.06& 0.0554& 1.2601\\
        & FNN-MIP       & 1& 0.05& 0.0476& 1.2602\\
    \midrule
    \multirow{3}{*}{2\_10}
        & ICNN-BB           & 1& 0.01& 0.0060& 3.0834\\
        & ICNN-MIP     & 1& 0.03& 0.0274& 3.0834\\
        & FNN-MIP       & 1& 0.03& 0.0341& 3.0834\\
    \midrule
    \multirow{3}{*}{3\_15}
        & ICNN-BB           & 1& 0.01& 0.0080& 3.7286\\
        & ICNN-MIP     & 1& 0.04& 0.0403& 3.7286\\
        & FNN-MIP       & 1& 0.05& 0.0528& 3.7286\\
    \midrule
    \multirow{3}{*}{4\_20}
        & ICNN-BB           & 5& 0.30& 0.0600& 4.8509\\
        & ICNN-MIP     & 1& 0.10& 0.1003& 4.8509\\
        & FNN-MIP       & 1& 0.13& 0.1278& 4.8509\\
    \midrule
    \multirow{3}{*}{5\_25}
        & ICNN-BB           & 72485& 3760.75& 0.0519& -\\
        & ICNN-MIP     & 268442& 204.04& 0.0008& 6.5079\\
        & FNN-MIP       & 50320& 26.29& 0.0005& 6.3888\\
    \bottomrule
    \end{tabular}
\end{table}

The LP-relaxation tightness of ICNN-MIP relative to FNN-MIP, analysed in Section~\ref{sec:icnn_mip}, is preserved in Case 3 using either solver. Using HiGHS, ICNN-MIP explores substantially fewer nodes than FNN-MIP on every instance up to $4\_{20}$: one node versus hundreds or thousands on $2\_{10}$ and $3\_{15}$, and roughly half as many on $4\_{20}$. Using Gurobi, both methods solve instances up to $4\_{20}$ at the root, and the node-count advantage of ICNN-MIP is not visible at that scale. The $5\_25$ instance requires careful interpretation. FNN-MIP solves faster than ICNN-MIP using Gurobi (26~s versus 204~s), but this does not reflect a tighter relaxation. Inspection of the logs shows that the two formulations yield the same root bound ($6.2434$), consistent with the relaxation behaviour discussed in Section~\ref{sec:icnn_mip}. The FNN surrogate admits a lower-cost feasible solution ($6.3888$ versus $6.5079$), due to differences in the response patterns represented by the two fitted surrogates. 

ICNN-BB is the fastest method on the moderate-size instances using HiGHS. On $2\_{10}$ and $3\_{15}$, it completes in under 0.15~s while both MIP methods require seconds, a speedup of roughly one to two orders of magnitude. On $4\_{20}$, the speed advantage narrows, but ICNN-BB remains the fastest method. 
Using Gurobi, every instance up to $4\_{20}$ solves in a fraction of a second. ICNN-BB's per-node cost stays below Gurobi's, but Gurobi proves optimality for these MIPs at its root through its internal procedures. ICNN-BB only overtakes Gurobi when the epigraph embedding is exact at its own root ($2\_{10}$, $3\_{15}$), and otherwise ($1\_{5}$, $4\_{20}$) the accumulated time spent on LP nodes exceeds Gurobi's single root-node solve.

The $5\_{25}$ instance exposes the scalability frontier of ICNN-BB. With five blends each taking an 11-dimensional input, the concave envelope at each node must be constructed over a 55-dimensional box, and the vertex set and LP size grow rapidly with branching depth. Using either solver, ICNN-BB explores around 72{,}000 nodes without returning a feasible incumbent. The MIP formulations fare better at this scale because their per-node cost is not dominated by input dimension, unlike ICNN-BB. 
Therefore, Case~3 identifies a concrete crossover point at which ICNN-BB delivers substantial speedups for small-to-moderate surrogate input dimensions by replacing computationally expensive MIP-based search with relatively inexpensive LP-based subproblem solves, while the exponential growth of the vertex set eventually overwhelms this advantage as the dimension grows.

As in Cases~1 and~2, ICNN-BB and ICNN-MIP objectives agree up to solver tolerance wherever both methods solve to optimality. Any remaining gap between these and FNN-MIP reflects differences among the trained surrogates rather than the optimisation procedures.

\section{Conclusions}
\label{sec:con}

This paper studied the use of input convex neural networks as surrogates in mathematical optimisation. 
We derived the ICNN-MIP formulation and showed that its LP relaxation is exact under direct output minimisation and typically yields stronger relaxations than FNN-MIP in general embedded problems. Combining the epigraph with a concave envelope attains the convex hull of the ICNN's graph over box domains, on which we developed ICNN-BB, a specialised BB algorithm that eliminates the binary variables of the surrogate encoding. It terminates at the root when the epigraph embedding is valid and otherwise branches on the input domain, tightening the relaxation over sub-boxes until convergence. Three case studies from the literature confirm the resulting two-tier advantage, namely, tighter MIP relaxations using standard solvers and efficient LP-driven convergence via ICNN-BB. They also show that the surrogate's input dimensionality sets the practical scalability frontier of ICNN-BB. 

Several directions remain open. The validity condition of the epigraph embedding could be sharpened into an a priori diagnostic for when branching can be avoided on a given trained ICNN. Lower-complexity constructions of the concave envelope, such as sampled, sparsified, or hierarchical alternatives to the $2^n$ vertex enumeration, are a natural avenue for scaling to higher input dimensions, at the cost of relaxations weaker than the convex hull. Further algorithmic refinements, including specialised branching strategies, variable selection, primal heuristics, and parallelisation schemes, could reduce search effort on instances where branching is unavoidable. Replacing the current ad hoc convexity diagnostic with a principled check, such as that of~\citet{gagneux_convexity_2025}, would provide a more rigorous criterion for selecting ICNN surrogates. Finally, the convexity restriction can be relaxed by considering partially convex architectures or ICNN and FNN mixtures, broadening the scope of convexity-exploiting surrogates in mathematical optimisation.

\section*{Acknowledgements}
We thank Eetu Reijonen and Nikita Belyak for their preliminary work on this project. Yu Liu gratefully acknowledges the support from the China Scholarship Council.

\section*{Data availability}
All data and implementations are available in a repository referenced in the paper.

\bibliographystyle{apalike-ejor}
\bibliography{references}

\end{document}